\declaretheorem[]{theorem}
\declaretheorem[sibling=theorem]{lemma}
\declaretheorem[sibling=theorem]{corollary}
\declaretheorem[sibling=theorem]{proposition}
\declaretheorem[numbered=no]{remark}
\declaretheorem[sibling=theorem]{definition}
\declaretheorem[numbered=no]{example}
\numberwithin{equation}{section}
\newcommand{\bb}[1]{\mathbb{#1}}
\newcommand{\cc}[1]{\mathcal{#1}}
\newcommand{\co}[1]{\left[#1\right )} 
\newcommand{\ob}[1]{\left(#1\right )} 
\newcommand{\cb}[1]{\left[#1\right ]} 
\newcommand{\abs}[1]{\left\vert#1\right\vert} 
\newcommand{\norm}[1]{\|#1\|} 
\newcommand{\indicator}{\mathbbm{1}}
\newcommand{\indicatorthat}[1]{{\mathbbm{1}}_{\left\{#1\right\}}}
\newcommand{\R}{\bb R}
\newcommand{\C}{\bb C}
\newcommand{\N}{\bb N}
\newcommand{\vol}{\mathrm{vol}} 
\newcommand{\rank}{r} 
\newcommand{\warp}{\omega} 
\newcommand{\asa}{\cc A} 
\newcommand{\asabase}[1]{\textsf{B}(#1)} 
\newcommand{\link}{\mathsf{Tan}} 
\newcommand{\disj}{\mathsf{Dis}} 
\title{Dimensional reduction for generalized continuum polymers}
\author{Tyler Helmuth}
\address{Department of Mathematics\\ 
  899 Evans Hall, Berkeley, CA, 94720-3840 USA}
\email{jhelmt@math.berkeley.edu}
\begin{document}
\maketitle
\begin{abstract}
  The Brydges-Imbrie dimensional reduction formula relates the
  pressure of a $d$-dimensional gas of hard spheres to a model of
  $(d+2)$-dimensional branched polymers. Brydges and Imbrie's proof
  was non-constructive and relied on a supersymmetric localization
  lemma. The main result of this article is a constructive proof of a
  more general dimensional reduction formula that contains the
  Brydges--Imbrie formula as a special case. Central to the proof are
  \emph{invariance lemmas}, which were first introduced by Kenyon and
  Winkler for branched polymers. The new dimensional reduction
  formulas rely on invariance lemmas for central hyperplane
  arrangements that are due to M\'esz\'aros and Postnikov.

  Several applications are presented, notably dimensional
  reduction formulas for (i) non-spherical bodies and (ii) for
  corrections to the pressure due to symmetry effects.
  
  \smallskip
  \smallskip
  \noindent \textbf{\keywordsname.} Branched polymers, hard spheres,
  dimensional reduction, central hyperplane
  arrangements, combinatorial reciprocity, Mayer expansion.
\end{abstract}

\section{Generalized dimensional reduction}
\label{sec:Intro}

\subsection{Introduction}
\label{sec:introduction}
In 2003 Brydges and Imbrie discovered a remarkable dimensional reduction
formula that equates the pressure of a gas of hard
spheres in $\R^{d}$ with the volume of branched polymers in
$\R^{d+2}$~\cite{BI}. See \Cref{fig:HCG-BP} for an illustration of
these models, and \Cref{sec:Intro-BP} for precise definitions. Their
result has very interesting corollaries: it implies
exact enumerative formulas for seemingly intractable high-dimensional
integrals, and it relates the critical behaviour of
$(d+2)$-dimensional branched polymers to the critical behaviour of the
$d$-dimensional hard sphere model. For $d=0,1$ this is a very powerful
reduction.

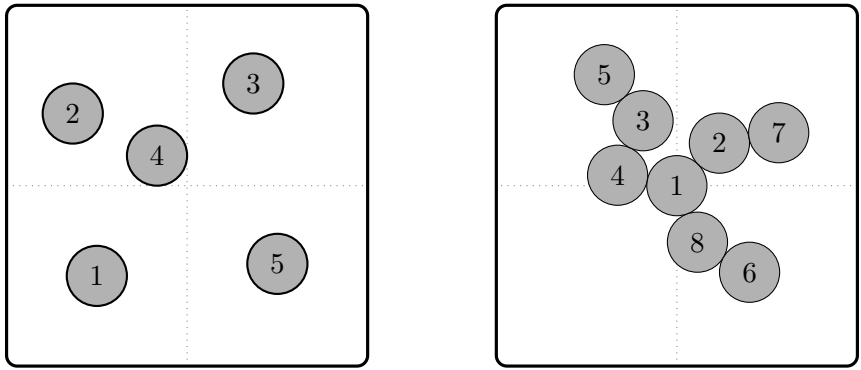
\begin{figure}
  \centering
  \begin{tikzpicture}[scale=.8]
    \draw[gray,dotted] (0,3) -- (6,3);
    \draw[gray,dotted] (3,0) -- (3,6);
    \draw[very thick,black, rounded corners] (0,0) rectangle (6,6);
    \draw[black,thick, fill = black!30] (1.5,1.5) circle (.5cm);
    \draw[black,thick, fill = black!30] (1.1,4.2) circle (.5cm);
    \draw[black,thick, fill = black!30] (4.1,4.7) circle (.5cm);
    \draw[black,thick, fill = black!30] (2.5,3.5) circle (.5cm);
    \draw[black,thick, fill = black!30] (4.5,1.7) circle (.5cm);
    \node at (1.5,1.5) {$1$};
    \node at (1.1,4.2) {$2$};
    \node at (4.1,4.7) {$3$};
    \node at (2.5,3.5) {$4$};
    \node at (4.5,1.7) {$5$};
  \end{tikzpicture}
  \qquad\qquad
  \begin{tikzpicture}[scale=.8]
    \draw[gray,dotted] (0,-3) -- (0,3);
    \draw[gray,dotted] (3,0) -- (-3,0);
    \draw[very thick,black, rounded corners] (-3,-3) rectangle (3,3);
    \path ++(0,0) node (pa) {} ++(170:1cm) node (pb) {} ++(65:1cm)
    node (pc) {} ++(130:1cm) node (pd) {};
    \path ++(0,0) ++(45:1cm) node (pe) {} ++(10:1cm) node (ph) {};
    \path ++(0,0) ++(-70:1cm) node (pf) {} ++(-30:1cm) node (pg) {};
    \draw[black, fill = black!30] (pa) circle (.5cm);
    \draw[black, fill = black!30] (pb) circle (.5cm);
    \draw[black, fill = black!30] (pc) circle (.5cm);
    \draw[black, fill = black!30] (pd) circle (.5cm);
    \draw[black, fill = black!30] (pe) circle (.5cm);
    \draw[black, fill = black!30] (pf) circle (.5cm);
    \draw[black, fill = black!30] (pg) circle (.5cm);
    \draw[black, fill = black!30] (ph) circle (.5cm);
    \node at (pa) {$1$};
    \node at (pb) {$4$};
    \node at (pc) {$3$};
    \node at (pd) {$5$};
    \node at (pe) {$2$};
    \node at (pf) {$8$};
    \node at (pg) {$6$};
    \node at (ph) {$7$};
  \end{tikzpicture}
  \caption{Left: a configuration of hard spheres of equal radius in
    $\R^{2}$. The adjective hard reflects that the interiors are
    disjoint. Right: a branched polymer of disks of equal
    radius in $\R^{2}$. The configuration has an underlying tree
    structure: neighbouring vertices in the tree correspond to tangent
    disks. All disks have disjoint interiors. The dotted gray
    lines indicate the coordinate axes.}
  \label{fig:HCG-BP}
\end{figure}

Brydges and Imbrie's proof of dimensional reduction relied on a
non-constructive supersymmetric localization lemma. Looking to
understand the results of~\cite{BI}, Kenyon and Winkler studied
branched polymers for $d=2,3$ by direct methods~\cite{KW}.  Their
proofs are based on an \emph{invariance lemma}. To describe this
lemma, suppose the disk labelled $i$ in a planar branched polymer has
radius $r_{i}$. The invariance lemma states that the total volume
of planar branched polymers is unchanged as the radii $\{r_{i}\}$ are
varied. Inspired by this result, M\'esz\'aros and Postnikov introduced
a model of planar $\cc H$-polymers associated to any central
hyperplane arrangement $\cc H$, and showed that these planar polymers
also satisfy invariance lemmas~\cite{MP}. The main result of this
article is a constructive proof of dimensional reduction formulas for
all $d\geq 2$; invariance lemmas play a central role in the proof.

More precisely, this article establishes that (i) given a central
essential complex hyperplane arrangement, there are dimensional
reduction formulas from $2d+2$ dimensions to $2d$ dimensions, and (ii)
given a central essential real hyperplane arrangement, there are
dimensional reduction formulas from $d+2$ dimensions to $d$
dimensions. These results involve two main objects. The first is a
generalization of branched polymers called $\cc H$-polymers, which are
an extension of the planar polymers in~\cite{MP}. The second is a
generalization of the pressure of the hard sphere gas: for any central
hyperplane arrangement we define an analogue of the Mayer expansion,
which is a power series representation of the pressure of the hard
sphere gas. \Cref{sec:HPBP,sec:MMC} contain the precise definitions of
$\cc H$-polymers and the generalized pressure, and
\Cref{sec:Intro-Theorem} contains a more precise statement of the theorem.

The remainder of this introductory section briefly describes new results and
perspectives that follow from these generalized dimensional reduction
formulas and their proof.

The proof presented in this article yields more information than the
non-constructive proof in~\cite{BI}, and the following results are new
even in the case of branched polymers. First, we obtain a precise
description of the law of $d$-dimensional projections of
$(d+2)$-dimensional polymers, see
\Cref{cor:Projection-Law,cor:Projection-Law-Safe}. This last corollary
can be viewed as a generalization of~\cite[Theorem~7]{KW}, which gave
a description of $1$-dimensional projections of $3$-dimensional
branched polymers. Secondly, our proof of dimensional reduction
extends to some non-spherical bodies, see \Cref{thm:ASA-DR}.

The notion of an $\cc H$-polymer is combinatorially natural, but it
does not immediately connect with statistical mechanics, where
dimensional reduction formulas originated. In
\Cref{sec:SBP-DR} we consider \emph{symmetric hard sphere gases} and
show how $\cc H$-polymers naturally arise. Symmetric hard sphere gases
are models in which, for example, the presence of a sphere at $x_{i}$
implies the presence of a sphere at $-x_{i}$. The bulk properties of
these models coincide with the ordinary hard sphere gas, but there are
corrections to the bulk behaviour due to the symmetry
constraint. The corrections satisfy a dimensional reduction formula:
they can be expressed in terms of $\cc H$-polymers, where
the hyperplane arrangement $\cc H$ reflects the symmetry of the
constraint. See \Cref{fig:SHCG}.

\begin{figure}[h]
  \centering
    \begin{tikzpicture}[scale=.8]
    \draw[gray,dotted] (0,-3) -- (0,3);
    \draw[gray,dotted] (3,0) -- (-3,0);
    \draw[very thick,black, rounded corners] (-3,-3) rectangle (3,3);
    \draw[black,thick, fill = black!30] (1,1) circle (.5cm);
    \draw[black,thick, fill = black!30] (-.6,2.3) circle (.5cm);
    \draw[black,thick, fill = black!30] (1.7,2.2) circle (.5cm);
    \draw[black,thick, fill = black!30] (-2,2) circle (.5cm);
    \draw[black,thick, fill = black!30] (-2,.2) circle (.5cm);
    \node at (1,1) {$1$};
    \node at (-.6,2.3) {$2$};
    \node at (1.7,2.2) {$3$};
    \node at (-2,2) {$4$};
    \node at (-2,.2) {$5$};
    \draw[black,thick, fill = black!30] (-1,-1) circle (.5cm);
    \draw[black,thick, fill = black!30] (.6,-2.3) circle (.5cm);
    \draw[black,thick, fill = black!30] (-1.7,-2.2) circle (.5cm);
    \draw[black,thick, fill = black!30] (2,-2) circle (.5cm);
    \draw[black,thick, fill = black!30] (2,-.2) circle (.5cm);
    \node at (-1,-1) {$1'$};
    \node at (.6,-2.3) {$2'$};
    \node at (-1.7,-2.2) {$3'$};
    \node at (2,-2) {$4'$};
    \node at (2,-.2) {$5'$};
  \end{tikzpicture}
  \qquad\qquad
  \begin{tikzpicture}[scale=.8]
    \draw[gray,dotted] (0,-3) -- (0,3);
    \draw[gray,dotted] (3,0) -- (-3,0);
    \draw[very thick,black, rounded corners] (-3,-3) rectangle (3,3);
    \path ++(0,0) ++(30:1cm) node (pa) {} ++(150:1cm) node (pb) {} ++(210:1cm)
    node (pc) {} ++(130:1cm) node (pd) {} ++(60:1cm) node (pi) {};
    \path ++(0,0) ++(210:1cm) node (pap) {} ++(-30:1cm) node (pbp) {} ++(30:1cm)
    node (pcp) {} ++(310:1cm) node (pdp) {} ++(240:1cm) node (pip) {};

    \path ++(0,0) ++(30:1cm) ++(-15:1cm) node (pe) {};
    \path ++(0,0) ++(30:1cm) ++(70:1cm) node (pf) {} ++(50:1cm) node
    (pg) {};

    \path ++(0,0) ++(210:1cm) ++(165:1cm) node (pep) {};
    \path ++(0,0) ++(210:1cm) ++(250:1cm) node (pfp) {} ++(230:1cm) node
    (pgp) {};
    \draw[black, fill = black!30] (pa) circle (.5cm);
    \draw[black, fill = black!30] (pb) circle (.5cm);
    \draw[black, fill = black!30] (pc) circle (.5cm);
    \draw[black, fill = black!30] (pd) circle (.5cm);
    \draw[black, fill = black!30] (pi) circle (.5cm);

    \draw[black, fill = black!30] (pap) circle (.5cm);
    \draw[black, fill = black!30] (pbp) circle (.5cm);
    \draw[black, fill = black!30] (pcp) circle (.5cm);
    \draw[black, fill = black!30] (pdp) circle (.5cm);
    \draw[black, fill = black!30] (pip) circle (.5cm);

    \draw[black, fill = black!30] (pe) circle (.5cm);
    \draw[black, fill = black!30] (pf) circle (.5cm);
    \draw[black, fill = black!30] (pg) circle (.5cm);

    \draw[black, fill = black!30] (pep) circle (.5cm);
    \draw[black, fill = black!30] (pfp) circle (.5cm);
    \draw[black, fill = black!30] (pgp) circle (.5cm);

    \node at (pa) {$1$};
    \node at (pb) {$4$};
    \node at (pc) {$3$};
    \node at (pd) {$5$};
    \node at (pe) {$2$};
    \node at (pf) {$8$};
    \node at (pg) {$6$};
    \node at (pi) {$8$};

    \node at (pap) {$1'$};
    \node at (pbp) {$4'$};
    \node at (pcp) {$3'$};
    \node at (pdp) {$5'$};
    \node at (pep) {$2'$};
    \node at (pfp) {$8'$};
    \node at (pgp) {$6'$};
    \node at (pip) {$8'$};
  \end{tikzpicture}
  \caption{Left: a symmetric hard sphere gas configuration associated
    to the type $D_{n}$ Coxeter arrangement in $\R^{2}$. Right: a $\cc
    H$-polymer associated to the type $D_{n}$ Coxeter arrangement in
    $\R^{2}$. The dotted gray lines indicate the coordinate axes. In
    each figure the disks labelled $i$ and $i'$ are located at $x_{i}$ and
  $-x_{i}$, respectively. }
  \label{fig:SHCG}
\end{figure}
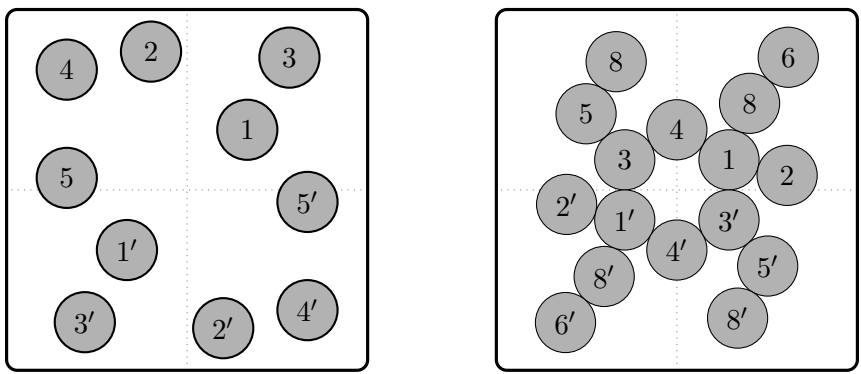

\subsubsection{Related literature}
\label{sec:Outlook}

Dimensional reduction formulas as discussed here first arose in the
context of theoretical physics; see~\cite{BI} for a discussion of this
literature. From a combinatorial viewpoint dimensional reduction
formulas have the flavour of combinatorial reciprocity~\cite{Beck}:
the pressure of the hard sphere gas, which \emph{a priori} makes sense
only for $z>0$, is being given an interpretation for $z<0$.

The dimensional reduction formulas discussed in this article decrease
the dimension by $2$; Imbrie~\cite{I} has obtained a dimensional
reduction formula relating \emph{directed} branched polymers in
$\R^{d+1}$ to the hard $\ell_{1}$-sphere gas in $\R^{d}$. The methods
of this article can be adapted to give another proof of Imbrie's
result. Similar formulas relating directed objects in $d+1$ dimensions
to undirected objects in $d$ dimensions have arisen often in the
context of random walk representations in statistical mechanics, see,
e.g.,~\cite{BrydgesFrohlichSpencer, FernandezFrohlichSokal, Aizenman,
  Helmuth}. It is an open and interesting question to understand when
dimensional reduction results are possible. In particular, are there
formulas involving a reduction in dimension by more than two
dimensions?

\subsubsection{Structure of this article}
\label{sec:structure}

The remainder of this introduction first provides some additional
context by introducing the hard sphere gas, branched polymers, and the
Brydges-Imbrie formula. The connection with the braid arrangement is
described to indicate why hyperplane arrangements are involved. The
introduction concludes with an informal statement of our main result
in \Cref{sec:Intro-Theorem}. A precise formulation and proof of the
main result is given in \Cref{sec:DR-Gen} once the necessary
definitions have been introduced in \Cref{sec:HA}. Many of the
definitions introduces are standard, but we have elected to include
them for the ease of readers from non-combinatorial
backgrounds. Applications of the main result are presented in
\Cref{sec:DR-HCG}.

\subsubsection{Notation and conventions}
\label{sec:notation-conventions}
Throughout the article the term \emph{branched polymer} will refer to
the model studied in~\cite{BI,KW}, i.e., the model that corresponds to
the braid arrangement as outlined in \Cref{sec:Intro-BP}. For other
arrangements $\cc H$ we will always write $\cc H$-polymers.  $\N$ will
denote the positive integers. For a graph $G=(V,E)$ edges $\{i,j\}\in
E$ will be abbreviated to $ij$, and the notation $ij\in G$ will
indicate $ij\in E(G)$. The integer $d$ will be reserved for the
dimension of a space, while $n$ will count points in a
configuration. Configurations are therefore finite point sets in
$\R^{dn}$. We write $2^{A}$ for the set of all subsets of a set $A$,
and $\indicatorthat{A}$ for the indicator function of the set $A$.

\subsection{Branched polymers, the hard sphere gas, and the
  Brydges-Imbrie formula}
\label{sec:Intro-BP}

\subsubsection{Branched polymers}
\label{sec:Intro-BP-1}
Let $T$ be a (labelled) spanning tree on $K_{n}$, the complete graph
on $\cb{n} \equiv \{1,2,\dots, n\}$. For integers $d\geq 2$ a \emph{branched
  polymer of type $T$ in $\R^{d}$} is a configuration of $n$ points
$(x_{1}, \dots, x_{n})$, $x_{i}\in \R^{d}$, such that
\begin{enumerate}
\item If $ij\in T$ then $\norm{x_{i}-x_{j}}_{2}=1$,
\item If $ij\notin T$ then $\norm{x_{i}-x_{j}}_{2}>1$,
\end{enumerate}
where $\norm{\cdot}_{2}$ is the Euclidean norm on $\R^{d}$. A \emph{branched polymer on $\cb{n}$} is a branched polymer of type
$T$ for some tree $T$ spanning $K_{n}$. Two branched polymers will be
considered equivalent if one is a translation of the other, i.e., the
space of branched polymers on $\cb{n}$ is a subset of
$\R^{dn}/\R^{d}$. Geometrically, the $x_{i}$ are the centers
of spheres of radius $\frac{1}{2}$, no two spheres have overlapping interiors,
and the tree $T$ determines the tangency graph of the spheres. This
definition of branched polymers was first introduced in~\cite{BI}. See
\Cref{fig:HCG-BP}.

Define $I^{T}_{BP}(x)$ to be $1$ if the points $x = (x_{1},\dots,
x_{n})$ form a branched polymer of type $T$, and define
$I^{T}_{BP}(x)$ to be $0$ otherwise. The \emph{volume of branched
  polymers of type $T$ in $\R^{d}$}, \emph{volume of $T$} for short, is given by
\begin{equation}
  \label{eq:Z-T}
  Z^{T}(z) = \int_{\ob{\cc S^{d-1}}^{n-1}} I^{T}_{BP}(x) \prod_{ij\in
    T} d\Omega^{d-1}(x_{i}-x_{j}),
\end{equation}
where $\cc S^{d-1}$ is the sphere of radius $1$ in $\R^{d}$ and
$\Omega^{d-1}$ is the standard surface measure on $\cc S^{d-1}$.

The partition function for branched polymers is defined by
\begin{equation}
  \label{eq:Z-BP}
  Z_{BP}^{(d)}(z) = \sum_{n\geq 1} \frac{z^{n}}{n!}\sum_{T\in \cc T\cb{n}} Z^{T}(z),
\end{equation}
where $\cc T\cb{n}$ denotes the set of spanning trees on $K_{n}$,
$z\in\R$ is the \emph{activity} of the model, and the superscript $d$
indicates that it is the partition function of branched polymers in
$\R^{d}$. For $\abs{z}$ sufficiently small this is a convergent power series.

\subsubsection{The hard sphere gas}
\label{sec:BI-HCG}
The \emph{hard sphere  gas} in a finite region $\Lambda\subset \R^{d}$ is
the model with partition function
\begin{equation}
  \label{eq:Z-HC}
  Z^{HC}_{\Lambda}(z) = \sum_{n\geq 0} \frac{z^{n}}{n!} \int_{\Lambda^{n}}
  I_{HC}(x)\,\prod_{i=1}^{n}dx_{i},
\end{equation}
where $z\in \R$ is the \emph{activity}, $x = (x_{1}, \dots, x_{n})\in
\R^{dn}$, and $I_{HC}(x) = \prod_{i\neq j} \indicatorthat{
  \norm{x_{i}-x_{j}}_{2} \geq 1}$. The hard-core constraint
$I_{HC}(x)$ means that each point $x_{i}$ can be thought of as the
center of a sphere of radius $\frac{1}{2}$, and that the interiors of the
spheres are pairwise disjoint. $\R^{0}$ is considered to be a one-point
space, so $Z_{HC}^{\Lambda}(z) = 1 + z$ when $d=0$. Due to the
hard-core constraint $Z_{\Lambda}^{HC}$ is a polynomial in $z$ for any
finite region $\Lambda$.

\subsubsection{The Brydges--Imbrie dimensional reduction formula}
\label{sec:Intro-BI}
Aside from its intrinsic interest, the following theorem has
many interesting consequences for statistical mechanics,
see~\cite{BI}. Note that the left-hand side involves the hard sphere
gas in $\R^{d}$, while the right-hand side involves branched polymers
in $\R^{d+2}$.
\begin{theorem}[Brydges-Imbrie~\cite{BI}]
\label{thm:BI}
  For all $z$ such that the right-hand side converges absolutely,
  \begin{equation}
    \label{eq:BI}
    \lim_{\Lambda\nearrow \R^{d}} \frac{1}{\abs{\Lambda}} \log
    Z_{HC}^{(d)}(z) = -2\pi Z_{BP}^{(d+2)}(-\frac{z}{2\pi}),
  \end{equation}
  where the limit is omitted when $d=0$; the superscript indicates the
  dimension in which the model is defined.
\end{theorem}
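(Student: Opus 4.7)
The plan is to verify the formula term by term in $z$ by resumming the Mayer expansion of the left-hand side into a sum over spanning trees and then applying an invariance lemma to match each tree's contribution with a polymer integral in $\R^{d+2}$. I expect this theorem to follow as a special case of the generalized dimensional reduction developed later in the article, corresponding to the braid arrangement.

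The first step is the standard cluster expansion: for $|z|$ sufficiently small,
\[
  \lim_{\Lambda\nearrow\R^d}\tfrac{1}{|\Lambda|}\log Z_{HC}^{(d)}(z) = \sum_{n\ge 1}\frac{z^n}{n!}\,\varphi_n^{(d)},
\]
where
\[
  \varphi_n^{(d)} = \sum_{G\in\cc C\cb{n}}\int \prod_{ij\in G}\ob{-\indicatorthat{\norm{x_i-x_j}_2<1}}\,dx_2\cdots dx_n,
\]
$\cc C\cb{n}$ is the set of connected graphs on $\cb{n}$, and $x_1=0$ is fixed by translation invariance. Absolute convergence is standard in the Mayer regime. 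Equating coefficients of $z^n$ reduces the theorem to the identity $\varphi_n^{(d)} = (-1)^{n+1}(2\pi)^{1-n}\sum_{T\in\cc T\cb{n}} Z^T_{(d+2)}$ for every $n\ge 1$.

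Next, I would apply a Penrose-type tree-graph identity to resum the connected-graph sum into a tree sum, writing $\varphi_n^{(d)} = \sum_{T\in\cc T\cb{n}} w_T^{(d)}$, where $w_T^{(d)}$ is a $d$-dimensional integral imposing a ``close to tangent'' constraint along edges of $T$ together with hard-core constraints on the remaining pairs. The theorem then reduces to a pointwise tree identity $w_T^{(d)} = (-1)^{n+1}(2\pi)^{1-n} Z^T_{(d+2)}$ for each spanning tree $T$.

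The final and main step is to pass from $\R^d$ to $\R^{d+2}$ via the invariance lemma. Splitting $\R^{d+2}=\R^d\oplus\R^2$ and parametrizing a type-$T$ polymer by its positions $x_i\in\R^{d+2}$ together with tangent unit vectors along edges of $T$, the M\'esz\'aros-Postnikov invariance lemma for the braid arrangement should permit deforming the tangency directions so that the integral factorizes into an $\R^2$-factor and an $\R^d$-factor. The $\R^2$-factor yields $(2\pi)^{n-1}$ from integrating $n-1$ tangent unit vectors over $S^1$, together with the alternating sign from orientation conventions, while the $\R^d$-factor reproduces $w_T^{(d)}$ exactly. The main obstacle is establishing a form of the invariance lemma strong enough to effect this dimensional collapse: the geometric Kenyon-Winkler argument handles $d=2,3$, and the general case $d\ge 2$ relies on the combinatorial invariance framework of M\'esz\'aros-Postnikov for central hyperplane arrangements, which is precisely the machinery this article generalizes.
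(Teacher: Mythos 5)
Your opening steps match the paper's: the Mayer expansion over connected graphs, the identification with the braid arrangement, and the intent to invoke the M\'esz\'aros--Postnikov invariance lemma are all exactly the paper's route (\Cref{sec:BI-DR-Proof} reduces \Cref{thm:BI} to \Cref{thm:DR}). The gap is in your middle step: the reduction to a \emph{pointwise tree identity} $w_T^{(d)} = (-1)^{n+1}(2\pi)^{1-n} Z^T_{(d+2)}$ for each individual spanning tree $T$. No such identity holds, and the invariance lemma cannot deliver it: \Cref{thm:MP} asserts radius-independence only for the \emph{total} volume $\sum_{S\in\cc B}\vol(P^S_{\cc H}(2))$, summed over all bases. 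The volume of planar polymers of a single fixed type $T$ does depend on the radii $\{R_e\}$ (already for $n=3$ the per-tree volumes of planar branched polymers are not equal, while their sum is $(2\pi)^2\cdot 2!$), so your claim that for a fixed $T$ the $\R^2$-factor ``yields $(2\pi)^{n-1}$ from integrating $n-1$ tangent unit vectors over $S^1$'' ignores the non-tree constraints $\norm{h_e(w)}_2 > R_e^\star(y)$ in the plane, which genuinely cut down the circle integrals for each individual tree and are only accounted for in aggregate. Relatedly, a Penrose-type resummation produces tree weights $w_T$ that depend on an ordering/partition scheme, and matching them tree-by-tree to polymer volumes would require a corresponding basis decomposition on the polymer side --- which in this paper only appears (as \Cref{cor:Projection-Law-Safe}, via safe bases and \eqref{eq:HM-CP-Bases}) \emph{after} the aggregate identity is proved, not as an ingredient of it.

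The correct organization, which your proposal is missing, is by connected spanning subgraphs rather than by trees. In the proof of \Cref{thm:DR} both sides are decomposed over regions $\Gamma_G$ indexed by rank-$n$ subsets $G$ of hyperplanes (equivalently, connected spanning subgraphs of $K_n$ for the braid arrangement): the Mayer side collapses to $\sum_G \vol(\Gamma_G)\chi_G(0)$ by \eqref{eq:HM-CP-Spanning}, and on the polymer side, at each fixed projection $y\in\Gamma_G$, the sum over \emph{all} bases $S$ of $G$ of the $w$-integrals is the total planar $\cc H_G$-polymer volume with radii $R_e^\star(y)$, to which \Cref{thm:MP} applies and gives $(-2\pi)^{n}\chi_G(0)$ uniformly in $y$. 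If you replace your tree-by-tree reduction with this subgraph-by-subgraph matching (keeping the sum over bases intact until after the invariance lemma has been applied), the rest of your outline goes through and reproduces the paper's argument.
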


\subsection{The connection with hyperplane arrangements}
\label{sec:Intro-BP-Braid}
The following gives an alternate description of the space of branched
polymers in $\R^{d}$ that establishes a connection with hyperplane
arrangements. This connection was first described, for $d=2$,
in~\cite{MP}.

Recall that the braid arrangement $\cc B_{n}$ is the collection of
hyperplanes $\cc H = \{H_{ij}\}_{1\leq i<j\leq n}$, $H_{ij}$
the hyperplane defined by the linear functional $h_{ij}(x) =
x_{i}-x_{j} = 0$. The bases of the braid arrangement can be
identified with spanning trees in $\cc T\cb{n}$.

\begin{proposition}
  \label{def:BP-Alternate}
  Define
  \begin{align*}
    \link(ij) &= \{(x_{1}, \dots, x_{n}) \mid
                \norm{h_{ij}(x)}_{2}=1\}
    \\
    \disj(ij) &= \{(x_{1}, \dots, x_{n}) \mid \norm{h_{ij}(x)}_{2}>1\},
  \end{align*}
  where both $\link(ij)$ and $\disj(ij)$ are subsets of
  $\R^{nd}$. The space of $d$-dimensional branched polymers is
  the set $P_{\cc B_{n}}(d) = \coprod_{T\in \cc T\cb{n}} P_{\cc
    B_{n}}^{T}(d)$, where
  \begin{equation}
    \label{eq:HPBP-intro}
    P_{\cc B_{n}}^{T}(d) = \ob{\bigcap_{ij\in T}\link(ij) \cap
    \bigcap_{ij\notin T} \disj(ij)}\big/(1,1,\dots,1)\R^{d}.
  \end{equation}
\end{proposition}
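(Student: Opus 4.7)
The plan is to verify the claim by directly unpacking the definitions of $\link(ij)$ and $\disj(ij)$ and comparing them to conditions (i) and (ii) in the definition of a branched polymer of type $T$ given in \Cref{sec:Intro-BP-1}. Because the statement is essentially a reformulation, the only substantive point is to check that the union over spanning trees is genuinely disjoint. There is no serious technical obstacle; the main thing to take care of is the quotient by translation.

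First I would fix a spanning tree $T \in \cc T\cb{n}$ and show that the set $\bigcap_{ij \in T} \link(ij) \cap \bigcap_{ij \notin T} \disj(ij) \subset \R^{nd}$ is precisely the set of configurations $x = (x_1, \dots, x_n)$ satisfying $\norm{x_i-x_j}_2 = 1$ for all $ij \in T$ and $\norm{x_i-x_j}_2 > 1$ for all $ij \notin T$. This follows immediately from the defining linear functional $h_{ij}(x) = x_i - x_j$ of the hyperplane $H_{ij}$ in the braid arrangement $\cc B_n$, since then $\norm{h_{ij}(x)}_2 = \norm{x_i-x_j}_2$. Passing to the quotient by the diagonal $(1,1,\dots,1)\R^d$ implements the translation equivalence in the definition of branched polymers, and one observes that both conditions are translation-invariant so the quotient is well defined.

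Next I would verify that the union $\coprod_{T \in \cc T\cb{n}} P^T_{\cc B_n}(d)$ is indeed a disjoint union. Suppose a configuration $x$ represents an element of both $P^{T_1}_{\cc B_n}(d)$ and $P^{T_2}_{\cc B_n}(d)$ for distinct trees $T_1, T_2$. Then there exists an edge $ij$ belonging to exactly one of $T_1, T_2$, say $ij \in T_1 \setminus T_2$. Membership in $P^{T_1}_{\cc B_n}(d)$ forces $\norm{x_i-x_j}_2 = 1$, while membership in $P^{T_2}_{\cc B_n}(d)$ forces $\norm{x_i-x_j}_2 > 1$, a contradiction. Disjointness then follows, and combining with the first step gives the claimed identification of the space of $d$-dimensional branched polymers as $P_{\cc B_n}(d)$.

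The only delicate point to be explicit about is that the branched polymer definition already quotients by the translation action of $\R^d$ on $\R^{dn}$, so the equality of spaces in \eqref{eq:HPBP-intro} is understood at the level of cosets. The main payoff of the proposition is not the proof itself but its reinterpretation: the tangency and disjointness conditions for branched polymers are organized by the edges of the braid arrangement $\cc B_n$, with spanning trees $T$ playing the role of bases. This reformulation is what motivates the generalization to $\cc H$-polymers for an arbitrary central essential arrangement $\cc H$ carried out in \Cref{sec:HPBP}.
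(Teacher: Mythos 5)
Your proof is correct and matches the paper's treatment: the paper simply remarks that ``verifying this proposition is a matter of translation,'' and your explicit unpacking of $\link(ij)$, $\disj(ij)$, the translation quotient, and the disjointness of the union over trees is exactly that routine verification spelled out.
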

Verifying this proposition is a matter of translation. We will see
that viewing branched polymers from the perspective of hyperplane
arrangements is fruitful.

\subsection{Informal description of the main result}
\label{sec:Intro-Theorem}
As mentioned in \Cref{sec:introduction}, there are two natural
statistical mechanical objects associated to any central hyperplane
arrangement. The first is a space of a $d$-dimensional \emph{$\cc
  H$-polymers} for $d\geq 2$, denoted $P_{\cc H}(d)$, on which there
is a natural volume measure $\vol$. The second is the
$d$-dimensional \emph{pressure} $p^{(d)}_{\cc H}$ of an arrangement $\cc
H$ for $d\geq 0$. For precise descriptions of these objects see
\Cref{sec:HPBP,sec:MMC} respectively.

To establish an analogue of \Cref{thm:BI} requires a sequence
$\vec{\cc H} = (\cc H_{n})_{n\in \N}$ of central hyperplane
arrangements, where the arrangement $\cc H_{n}$ is in $\R^{n}$. Define
the \emph{$\vec{\cc H}$-polymer partition function} to be
\begin{equation}
  \label{eq:Intro-HP-P}
  Z_{\vec{\cc H}}^{(d)}(z) = \sum_{n\geq 1} \frac{z^{n}}{n!} \vol(P_{\cc H_{n}}(d)),
\end{equation}
and the \emph{pressure of $\vec{\cc H}$} to be
\begin{equation}
  \label{eq:Intro-HP-Pressure}
  p^{(d)}_{\vec{\cc H}}(z) = \sum_{n\geq 1} \frac{z^{n}}{n!} p^{d}_{\cc H_{n}}.
\end{equation}

Recall that a hyperplane arrangement is called \emph{essential} if a
maximal linearly independent set of normals forms a basis for the
vector space the arrangement lives in.
\begin{theorem}
  \label{thm:Intro-Main}
  Let $\vec{\cc H} = (\cc H_{n})_{n\geq 1}$ be a sequence of central
  essential real hyperplane arrangements, $\cc H_{n}$ an arrangement
  in $\R^{n}$, and let $d$ be a non-negative integer. As formal power
  series in $z$,
  \begin{equation}
    \label{eq:thm-Intro-Main}
    p^{(d)}_{\vec{\cc H}}(z) = Z^{d+2}_{\vec{\cc H}}(-\frac{z}{2\pi}).
  \end{equation}
\end{theorem}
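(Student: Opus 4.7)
My plan is to prove \eqref{eq:thm-Intro-Main} coefficient by coefficient in $z$. It suffices to show
\begin{equation*}
  p^{(d)}_{\cc H_{n}} \;=\; \ob{-\tfrac{1}{2\pi}}^{n}\vol(P_{\cc H_{n}}(d+2))
  \qquad\text{for every } n\geq 1.
\end{equation*}
This removes any convergence issues and reduces the theorem to a single integral identity per $n$, attached to the central essential arrangement $\cc H_{n}$ in $\R^{n}$.

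For the left-hand side I would use the Mayer-type expansion that defines $p^{(d)}_{\cc H_{n}}$ in \Cref{sec:MMC}. In analogy with the classical Mayer expansion for the hard sphere pressure, this expresses $p^{(d)}_{\cc H_{n}}$ as a signed sum indexed by ``connected'' combinatorial data attached to $\cc H_{n}$, each term being an integral over $\R^{dn}$ modulo translations with integrand a product of factors built from the constraints $\norm{h(x)}_{2}\geq 1$, $h\in\cc H_{n}$. The signs are produced by M\"obius-style inclusion--exclusion and embody the combinatorial reciprocity mentioned in \Cref{sec:Outlook}. For the right-hand side I would stratify $P_{\cc H_{n}}(d+2) = \coprod_{B} P_{\cc H_{n}}^{B}(d+2)$ as $B$ ranges over bases of $\cc H_{n}$, the natural $\cc H$-analogue of \Cref{def:BP-Alternate}, and apply the M\'esz\'aros--Postnikov invariance lemma, which guarantees that the total volume is independent of the choice of tangency thresholds $r_{h}$. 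With the thresholds in a convenient position I would then split each coordinate in $\R^{d+2}$ as $(y_{i},w_{i})\in \R^{d}\times \R^{2}$ and perform the integration over the $n$ copies of $\R^{2}$. Each tangency $\norm{h(x)}_{2}=r_{h}$ becomes $\norm{h(y)}_{2}^{2}+\norm{h(w)}_{2}^{2}=r_{h}^{2}$, and polar coordinates in $\R^{2}$ produce an angular factor $2\pi$ together with an orientation-dependent sign per tangency; summed over the $n$ tangencies of a basis of the rank-$n$ essential arrangement this accounts for the prefactor $(-2\pi)^{n}$.

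\textbf{Main obstacle.} The hardest step is showing that, after the $w$-integration, the residual integrand on $\R^{dn}$ reassembles into the signed Mayer expansion of $p^{(d)}_{\cc H_{n}}$ term by term. The invariance lemma is precisely the tool that aligns the two pictures: by sending the thresholds $r_{h}$ to appropriate limits one can deform the basis-indexed strata $P_{\cc H_{n}}^{B}(d+2)$ so that their $y$-projections realise the supports of individual Mayer terms, while the polar calculation in $w$ supplies the matching signs. Verifying this combinatorial-and-analytic alignment, which turns the planar Mészáros--Postnikov statement into a dimensional reduction machine in arbitrary $d$, is where I expect the bulk of the work to go; once it is in place the coefficient identity, and hence \Cref{thm:Intro-Main}, follows by a bookkeeping argument.
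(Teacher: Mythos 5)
Your overall skeleton --- reducing to a coefficientwise identity for each $\cc H_{n}$, splitting coordinates as $(w,y)\in\R^{2n}\times\R^{dn}$, projecting the sphere measure down two dimensions, and invoking the M\'esz\'aros--Postnikov invariance lemma --- is indeed the skeleton of the paper's argument: \Cref{thm:Intro-Main} is obtained by applying the single-arrangement identity \Cref{thm:DR} to each $\cc H_{n}$ and summing. However, the step you label the ``main obstacle'' is precisely the content of the proof, and the mechanism you propose for it does not work. \Cref{thm:MP} is a statement about \emph{planar} polymers only; there is no invariance of $\vol(P_{\cc H_{n}}(d+2))$ under changes of the radii available to you, so you cannot ``put the thresholds in a convenient position'' before splitting coordinates, nor deform the basis strata so that their $y$-projections realise individual Mayer terms. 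The correct use of invariance is fiberwise: after applying \Cref{lem:Archimedes}, for each \emph{fixed} $y$ the $w$-integral, summed over bases, is the volume of a planar polymer whose radii $R^{\star}_{e}(y)^{2}=(1-\norm{h_{e}(y)}_{2}^{2})\wedge 0$ depend on $y$, and the invariance lemma is needed exactly because these radii vary with $y$: it says the fiber volume is the constant $(-2\pi)^{n}\chi_{G}(0)$.

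The second missing ingredient is the common refinement on which the two sides are compared. Individual Mayer terms (spanning subsets $H$) do not correspond to individual strata of the polymer space. Instead one decomposes $\R^{dn}=\coprod_{G}\Gamma_{G}$ according to which constraints $\norm{h_{e}(y)}_{2}\leq 1$ hold, and shows that on \emph{both} sides the coefficient of $\vol(\Gamma_{G})$ is $\chi_{G}(0)$: on the pressure side because $\sum_{H\subset G,\ r(H)=n}(-1)^{\abs{H}}=\chi_{G}(0)$ by \eqref{eq:HM-CP-Spanning}, and on the polymer side by the fiberwise invariance just described. Without identifying this common quantity your concluding ``bookkeeping argument'' has nothing to match up, and your description of the signs as ``orientation-dependent, one per tangency from polar coordinates'' is not a substitute --- the signed cancellations are exactly what \Cref{thm:MP} encapsulates. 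Finally, note that the generalized pressure of \Cref{def:Gen-Pressure} is a sum over full-rank subsets of $E(M_{\cc H_{n}})$ integrated over $\R^{dn}$ with no quotient by translations (the arrangements are essential in $\R^{n}$); the ``connected graphs modulo translation'' picture you describe is the braid-arrangement specialization, not the object the theorem is about.
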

The restriction to real hyperplane arrangements in
\Cref{thm:Intro-Main} is not needed; a similar statement holds for
complex arrangements provided $d$ is even. See \Cref{sec:DR-Gen} for
the precise statement.

\begin{remark}
  \label{rem:Reduction-BI}
  \Cref{thm:BI} is the special case of \Cref{thm:Intro-Main} when
  $\cc H_{n} = B_{n+1}$, the braid arrangement in $\R^{n+1}/\R$.
  Details of this specialization are presented in \Cref{sec:BI-DR}.
\end{remark}

\subsection{Acknowledgements}
\label{sec:acknowledgements}

I would like to thank David Brydges and Matthias Beck for helpful
comments. Parts of this work were completed while I was a Postdoctoral
fellow at ICERM and while visiting SFB 1060 at Universit\"at Bonn, and
I would like to thank both for their support and hospitality. This
work was also partially supported by an NSERC postdoctoral fellowship.

\section{Central hyperplane arrangements and associated objects}
\label{sec:HA}

A \emph{finite hyperplane arrangement $\cc H$} is a finite collection
of hyperplanes in $K^{n}$ for $K$ a field and $n\in \N$. We will only
be interested in $K=\C$ or $K=\R$. An arrangement $\cc H$ is
\emph{central} if each hyperplane in $\cc H$ contains the origin. An
arrangement is \emph{essential} if the normals of the arrangement span
$K^{n}$. The remainder of this article will only involve central
arrangements, so the terms arrangement and central arrangement will be
used synonymously.

An arrangement $\cc H$ can be identified with a set $\{h_{e}\}_{e\in
  \cc H}$, $h_{e}\in (K^{n})^{\star}$ the linear functional
that defines the hyperplane $e$. Concretely,
\begin{equation}
  \label{eq:HD}
  h_{e}(x) = \sum_{i=1}^{n}a_{i}x_{i} = 0, \qquad x = (x_{1}, \dots,
  x_{n})\in K^{n}
\end{equation}
is the equation defining the hyperplane $e$, where $a_{i}\in K$ for
$i\in \cb{n}$. The remainder of this section introduces the
objects and properties associated to hyperplane
arrangements relevant for dimensional reduction formulas.

\subsection{Matroids and hyperplane arrangements}
\label{sec:HM}

\subsubsection{Matroids}
\label{sec:Matroid}

This section defines matroids and recalls some needed facts.  A
general introduction to matroids can be found in either
of~\cite{Oxley,Welsh}. Readers familiar with matroids may wish to skip
to \Cref{sec:HM-Mat} where the matroids that play a role in this
article are introduced.

\begin{definition}
  Let $E$ be a finite set. A \emph{matroid $M = (E,\cc I)$} with \emph{ground
    set $E$} is a non-empty collection of subsets $\cc I\subset 2^{E}$, the
  \emph{independent sets of $M$}, such that
  \begin{enumerate}
  \item if $A\in \cc I$ and $B\subset A$, then $B\in \cc I$,
  \item if $A,B\in \cc I$, $\abs{A}> \abs{B}$, then there is an
    $a\in A\setminus B$ such that $B\cup \{a\} \in \cc I$.
  \end{enumerate}
\end{definition}

A \emph{base} of a matroid is a maximal independent set. A subset
$S\subset E$ that is not independent is called \emph{dependent}.

\begin{example}
  \label{ex:FE-1}
  A fundamental example of a matroid is when $E$ is a finite collection
  of vectors in a vector space $V$. Independent sets $A\subset E$ are
  collections of linearly independent vectors.
\end{example}

A dependent set $S$ such that $S\setminus\{a\}$ is independent for all
$a\in S$ is called a \emph{circuit}. The following elementary fact
about circuits will be needed, see, e.g.~\cite[Corollary~1.2.6]{Oxley} for a proof. 
\begin{proposition}
  \label{prop:Fund-Circuit}
  Given a base $B\subset E$ and an edge $e\in E\setminus B$ there is
  a unique circuit in $B\cup\{e\}$. 
\end{proposition}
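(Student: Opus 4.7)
The plan is to prove existence and uniqueness separately, both as standard consequences of the independent-set axioms. For existence, since $B$ is maximal independent and $e \notin B$, the set $B \cup \{e\}$ is dependent. Iteratively remove elements while preserving dependence until no further removal is possible; the resulting set $C \subseteq B \cup \{e\}$ is then minimally dependent, hence a circuit. Moreover $e \in C$, since otherwise $C \subseteq B$ would be a dependent subset of an independent set, contradicting axiom (i).

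For uniqueness, suppose $C_{1}\neq C_{2}$ were both circuits contained in $B\cup \{e\}$. By the existence argument both must contain $e$. The strategy is to deduce a contradiction by exhibiting a dependent subset of $B$. The key input is the \emph{circuit elimination} property: if $C_{1}$ and $C_{2}$ are distinct circuits sharing an element $e$, then $(C_{1}\cup C_{2})\setminus \{e\}$ contains a circuit. Applying this here and noting $(C_{1}\cup C_{2})\setminus\{e\}\subseteq B$ exhibits a dependent subset of $B$, contradicting independence of $B$.

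The main obstacle is that the paper's axioms are those of independence rather than circuits, so the circuit elimination property is not immediate and must itself be derived. The standard route is to choose $f \in C_{1}\setminus C_{2}$ (nonempty because $C_{1}\neq C_{2}$), take a maximal independent subset $I$ of $(C_{1}\cup C_{2})\setminus \{e\}$ containing $C_{2}\setminus \{e\}$, and use the augmentation axiom (ii) iteratively, comparing with $C_{1}\setminus \{f\}$, to extract a circuit inside $(C_{1}\cup C_{2})\setminus\{e\}$. Since the statement is already recorded as \cite[Corollary 1.2.6]{Oxley}, the cleanest presentation is simply to invoke that reference.
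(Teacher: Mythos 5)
Your proposal is correct, and it ends up exactly where the paper does: the paper offers no proof of \Cref{prop:Fund-Circuit}, simply citing \cite[Corollary~1.2.6]{Oxley}, which is precisely the course you recommend. Your sketch (existence by stripping $B\cup\{e\}$ down to a minimal dependent set, uniqueness via circuit elimination, with the only real work being the derivation of circuit elimination from the independence axioms) is the standard argument and is sound, modulo the routine observation that distinct circuits are incomparable, so that $C_{1}\setminus C_{2}\neq\emptyset$.
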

The circuit identified in \Cref{prop:Fund-Circuit} is known as the
\emph{fundamental circuit of $e$ with respect to $B$}.

\begin{example}
  \label{ex:FE-2}
  A second fundamental example of a matroid is when $E$ is the edge set of
  a connected graph $G$. The independent sets are
  cycle-free subgraphs, and the bases are spanning trees.
  Circuits correspond to cycles in $G$, and given a spanning tree $T$ the
  fundamental circuit of an edge $e\notin T$ is the unique cycle in
  the graph $T\cup\{e\}$.
\end{example}

Let $\cc B(M)$ denote the set of bases of a matroid $M=(E,\cc I)$. If
$A,B\in \cc B(M)$ the definition of a matroid implies $A$ and $B$ have
the same cardinality. The cardinality of a base is called the
\emph{rank} of $M$, denoted $\rank(M)$. The rank of a matroid extends
to a function $\rank \colon 2^{E}\to \N$, $S\mapsto \rank(S)$, as
follows. For $S\subset E$ define a matroid $M_{S} = (S,\cc I_{S})$
where $\cc I_{S} = \{A\in \cc I \mid A\subset S\}$, and define
$\rank (S) \equiv \rank (M_{S})$. A subset $S\subset E$ is said to be
\emph{spanning} if $\rank(S)=\rank(M)$.

\subsubsection{The characteristic polynomial of a matroid}
\label{sec:HM-CP}
Let $r$ be the rank function of a matroid $M$ with ground set $E$. The
\emph{characteristic polynomial $\chi_{M}$ of $M$} is defined to be
\begin{equation}
  \label{eq:HM-CP}
  \chi_{M}(t) = \sum_{S\subset E}(-1)^{\abs{S}}t^{r(M)-r(S)}.
\end{equation}
In what follows the relevant evaluation of $\chi_{M}$ will be at
$t=0$, so only subsets of full rank contribute to~\eqref{eq:HM-CP}:
\begin{equation}
  \label{eq:HM-CP-Spanning}
    \chi_{M}(0) = \sum_{S\subset E}(-1)^{\abs{S}}\indicatorthat{r(S)=r(M)}.
\end{equation}

An alternative representation of the characteristic polynomial in
terms of bases will be useful. Fix a linear order $<$ on the elements
of the ground set $E$. Let $S\in \cc B(M)$. An element $e\in M$ is
called \emph{externally active for $S$} if $e\notin S$ and $e$ is the
minimal element in its fundamental circuit. Following~\cite{KW} say
\emph{$S$ is $<$ safe} if $S$ has no externally
active elements according to the order $<$. The following formula for
$\chi_{M}(0)$ is a specialization of a well-known (see,
e.g.,~\cite{Bjorner}) \emph{activity representation} of the Tutte
polynomial, which contains the characteristic polynomial as a special
case.
\begin{equation}
  \label{eq:HM-CP-Bases}
  \chi_{M}(0) = \sum_{S\in\cc B(M)} \indicatorthat{\textrm{$S$ is $<$ safe}}.
\end{equation}

\subsubsection{A matroid associated to a hyperplane arrangement}
\label{sec:HM-Mat}
As previously described hyperplane arrangements can be represented by
the set of normals to the hyperplanes. This gives rise to a matroid
$M_{\cc H}$ associated to the arrangement $\cc H$: the ground set
$E = E(M_{\cc H})$ of the matroid is the set of hyperplanes, and the
independent sets are the subsets of hyperplanes whose normals are
linearly independent.

\subsubsection{Convenient identifications and conventions}
\label{sec:HM-Ident}
For simplicity subsets of hyperplanes in an arrangement $\cc H$ will
be conflated with subsets of the ground set $E(M_{\cc H})$ of the
associated matroid.  Accordingly, we will use matroid terminology for
subsets of hyperplanes, e.g., we will refer to a base of a hyperplane
arrangment. We will define the characteristic polynomial of a
hyperplane arrangement to be the characteristic polynomial of the
associated matroid.

\subsection{Polymers associated to a
  hyperplane arrangements}
\label{sec:HPBP}

Throughout this section fix an essential central hyperplane
arrangement $\cc H$ in $\C^{n}$, and choose a positive real number
$R_{e}>0$ for each $e\in \cc H$. The construction in this section is
an extension of a construction in~\cite{MP} from $d=2$ to $d\geq
2$. Recall that elements $e\in \cc H$ can be identified with their defining
linear functionals $h_{e}\colon \C^{n}\to \C$.

\subsubsection{Construction of $\cc H$-polymers}
\label{sec:HPBP-Construct}
We first define $\cc H$-polymers in $\R^{2d}\cong \C^{d}$. Given
$e\in \cc H$, $x\in \C^{dn}$, define $h_{e}(x) \in \C^{d}$ as in~\eqref{eq:HD}:
\begin{equation*}
  h_{e}(x) = \sum_{i=1}^{n}a_{i}x_{i},
\end{equation*}
where $a_{i}x_{i}$ is the usual pointwise complex-scalar
multiplication of a vector $x_{i}\in \C^{d}$. In the next definition
notice that polymers are defined \emph{without} modding out by
translations, in contrast to what was done for branched polymers in
\Cref{sec:Intro-BP-1}.
\begin{definition}
  \label{def:HPBP}
  Define
  \begin{align*}
    \link(e) &= \{x\in \C^{nd} \mid \norm{h_{e}(x)}_{2}=R_{e}\}, \\
    \disj(e) &= \{x\in \C^{nd} \mid \norm{h_{e}(x)}_{2}>R_{e}\}.
  \end{align*}
  The \emph{space of $2d$-dimensional $\cc H$-polymers with radii
    $\{R_{e}\}_{e\in \cc H}$} is the set $P_{\cc H}(2d)\subset
  \C^{dn}$ defined by
  \begin{align}
    \label{eq:HPBP-1}
    P_{\cc H}(2d) &= \coprod_{ S\in \cc B(M_{\cc H})}
    P_{\cc H}^{S}(2d), \\
    \label{eq:HPBP-2}
    P_{\cc H}^{S}(2d) &= \bigcap_{e\in S} \link(e) \cap
                        \bigcap_{e\notin S} \disj(e).
  \end{align}
\end{definition}

There is a natural probability measure on the space $P_{\cc H}(2d)$,
defined as follows. Injectively map $P_{\cc H}^{S}(2d)$ into $(\cc
S^{(2d-1)})^{n}$ by $ x\mapsto \phi(x) = (\phi_{e}(x))_{e\in S}$,
where $\phi_{e}(x)$ is the unit vector in the direction
$h_{e}(x)$. The fact that this is an injection on $P_{\cc H}^{S}$
follows from the hypothesis that $S$ is a base. With this map in hand define
\begin{equation}
  \label{eq:HPBP-Vol}
  \vol(P_{\cc H}^{S}(2d)) = \vol( \{ \phi(x) \mid x\in P^{S}_{\cc H}(2d)\}),
\end{equation}
where the volume measure on the right-hand side of \Cref{eq:HPBP-Vol}
is the $n$-fold product of surface measure on $\cc
S^{(2d-1)}$.
As each of the sets $P_{\cc H}^{S}(2d)$ are disjoint, the volume of
$P_{\cc H}^{S}(2d)$ is the sum of the volumes of each
$P_{\cc H}^{S}(2d)$. After normalization, this gives a probability
measure on $P_{\cc H}(2d)$. Note that this
measure agrees with the measure on branched polymers given in
\Cref{sec:Intro-BP}. In general an explicit integral formula for this
measure is given by replacing trees in \Cref{eq:Z-T} with bases $S$ of
$M_{\cc H}$, and the function $I^{T}_{BP}$ with its analogue
$I^{S}_{\cc H}$. The law of $\cc H$-polymers will be thought of as the
law of the locations of the points $x_{i}\in \C^{d}$.

To construct $\cc H$-polymers in $d$ dimensions for $d$ odd a further
condition is needed. An arrangement is called \emph{complexified} if
each hyperplane $e\in \cc H$ is defined by a \emph{real} linear
functional, i.e., $a_{i}\in \R$ in~\eqref{eq:HD}. In this case
\Cref{def:HPBP} also defines branched polymers in $\R^{d}$ by
replacing $\C^{d}$ with $\R^{d}$. When $d$ is even this construction
for a complexified arrangement agrees with the construction in
\Cref{def:HPBP}.

\begin{remark}
  In what follows formulas will be written assuming a
  complexified arrangement $\cc H$. For non-complexified arrangements
  the formulas continue to hold if $\R^{d}$ is replaced by $\C^{d}$.
\end{remark}

\begin{proposition}
  \label{prop:HPBP-Symmetry}
  The law of $\cc H$-polymers is invariant under rotations of $\R^{d}$.
\end{proposition}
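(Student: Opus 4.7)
The plan is to verify rotation invariance at two levels: first that rotations preserve the combinatorial decomposition $P_{\cc H}(2d) = \coprod_{S} P^{S}_{\cc H}(2d)$ strata by stratum, and second that they preserve the measure on each stratum that is pulled back through $\phi$.

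First I would set up the action. Let $A$ be a rotation of $\R^{d}$ (resp.\ $\R^{2d} \cong \C^{d}$ in the non-complexified case), acting diagonally on configurations by $A\cdot x = (Ax_{1},\dots,Ax_{n})$. The key algebraic observation is that $h_{e}$ is scalar-linear in the $x_{i}$, so
\begin{equation*}
  h_{e}(A\cdot x) = \sum_{i=1}^{n} a_{i}(Ax_{i}) = A\sum_{i=1}^{n}a_{i}x_{i} = Ah_{e}(x),
\end{equation*}
where the middle equality is the fact that the scalars $a_{i}$ commute with the linear map $A$. Orthogonality of $A$ then yields $\norm{h_{e}(A\cdot x)}_{2} = \norm{h_{e}(x)}_{2}$, so $A$ preserves $\link(e)$ and $\disj(e)$ for every $e \in \cc H$, and in particular preserves each $P^{S}_{\cc H}(2d)$.

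For the measure I would use the map $\phi \colon P^{S}_{\cc H}(2d) \to (\cc S^{2d-1})^{n}$, $\phi_{e}(x) = h_{e}(x)/\norm{h_{e}(x)}_{2}$, from \Cref{sec:HPBP-Construct}. The equivariance of $h_{e}$ established above immediately gives $\phi_{e}(A\cdot x) = A\phi_{e}(x)$, so $\phi$ intertwines the diagonal action of $A$ on $P^{S}_{\cc H}(2d)$ with the diagonal action of $A$ on $(\cc S^{2d-1})^{n}$. The standard surface measure on $\cc S^{2d-1}$ is invariant under the full orthogonal group, so its $n$-fold product is invariant under the diagonal action. Since the measure on $P^{S}_{\cc H}(2d)$ is defined as the pushforward/pullback of the product surface measure through the bijection $\phi$, and pullback of an invariant measure along an equivariant bijection is invariant, we obtain invariance on each stratum. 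Summing over bases $S \in \cc B(M_{\cc H})$ and normalizing preserves invariance, giving the claim.

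No serious obstacle arises; the proof is an unwinding of definitions, the only substantive input being the rotation invariance of surface measure on a sphere together with the injectivity of $\phi$ on each stratum (which rests on $S$ being a base, so that $\{h_{e}\}_{e \in S}$ recovers $x$ up to the freedom that has already been used to define $\phi$).
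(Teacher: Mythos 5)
Your proof is correct and is essentially the paper's argument: the paper's one-line proof simply asserts that the space of $\cc H$-polymers and the measure on it are both rotationally invariant, and your write-up verifies exactly those two facts (equivariance of the $h_{e}$, hence preservation of each stratum $P^{S}_{\cc H}$, plus rotation invariance of the product surface measure pulled back through $\phi$). The only point worth flagging is that in the non-complexified case the coefficients $a_{i}$ are complex, so the commutation $h_{e}(A\cdot x)=Ah_{e}(x)$ requires $A$ to be $\C$-linear (unitary) rather than an arbitrary rotation of $\R^{2d}$ -- a restriction consistent with the paper's framing but worth stating.
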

\begin{proof}
  The space of $\cc H$-polymers is rotationally invariant, and the
  measure on the space is rotationally invariant.
\end{proof}

The right-hand side of \Cref{thm:Intro-Main} can now be made
precise.
\begin{definition}
  \label{def:HPoly-Z}
  Let $\vec{\cc H} = (\cc H_{n})_{n\geq 1}$ be a sequence of central
  essential hyperplane arrangements. The \emph{partition function} of the
  sequence $\vec{\cc H}$ in $\R^{d}$ is
  \begin{equation}
    \label{eq:Gen-Pressure}
    Z_{\vec{\cc H}}^{(d)}(z) = \sum_{n\geq 1} \frac{z^{n}}{n!}
    \vol(P_{\cc H_{n}}(d)),
  \end{equation}
  which is to be interpreted as a formal power series in $z$ if
  convergence is not known.
\end{definition}

\subsubsection{Geometric interpretation of the space of $\cc
  H$-polymers}
\label{sec:HP-Geometric}

There is a geometric interpretation of $\cc H$-polymers extending that
given in~\cite{MP}. To each $e\in S$ there is an associated subspace
$\{x \mid h_{e}(x) = 0\} \subset \R^{dn}$, and the set of vectors $x$
such that $\norm{h_{e}(x)}_{2} < R_{e}$ is the set of vectors $v$ that
are at distance less than $R_{e}$ from $\{x\mid h_{e}(x)=0\}$. Call
$\{x\mid \norm{h_{e}(x)}_{2}< R_{e}\}$ the \emph{$R_{e}$-thickening}
of this subspace. The space of $\cc H$-polymers is a union of regions
corresponding to bases $S$; the region associated to $S$ is the
boundary of the intersection of (i) the $R_{e}$-thickenings of the
subspaces associated to $e\in S$ and (ii) the complements of the
$R_{e}$-thickenings of the subspaces associated to $e\notin S$.

\subsubsection{The M\'esz\'aros-Postnikov invariance lemma}
\label{sec:MP}
$\cc H$-polymers in $\R^{2}$, i.e., two real dimensions, will be
called \emph{planar $\cc H$-polymers}. Planar $\cc H$-polymers possess
an amazing property: the volume of the space of planar
$\cc H$-polymers is independent of the radii $R_{e}$. The following
theorem is a special case of~\cite[Theorem~1]{MP}.
\begin{theorem}[M\'esz\'aros-Postnikov~\cite{MP}]
  \label{thm:MP}
  Let $\cc H$ be an essential central arrangement in $\C^{n}$. The
  volume of the space of planar $\cc H$-polymers with radii
  $\{R_{e}\}_{e\in \cc H}$ is
  \begin{equation}
    \label{eq:MP}
    \vol(P_{\cc H}^{S}(2)) = (-2\pi)^{n}\chi_{\cc H}(0),
  \end{equation}
  where $\chi_{\cc H}$ is the characteristic polynomial of the
  arrangement $\cc H$.
\end{theorem}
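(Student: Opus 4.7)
The plan is to parameterize planar $\cc H$-polymers by torus coordinates and then prove the formula by induction on $|\cc H|$ via a deletion-contraction recursion that matches the recursion for the characteristic polynomial $\chi_{\cc H}$. Since $\cc H$ is essential of rank $n$, for each base $S \in \cc B(M_{\cc H})$ the functionals $\{h_e\}_{e \in S}$ form a basis of $(\C^n)^{\star}$, so specifying angles $\theta = (\theta_e)_{e \in S} \in (\cc S^1)^n$ together with the radii $\{R_e\}_{e \in S}$ uniquely determines a configuration $x_S(\theta) \in \C^n$ via $h_e(x_S(\theta)) = R_e \theta_e$. Under the $\phi$ map of \Cref{sec:HPBP} this yields
\begin{equation*}
  \vol(P_{\cc H}^S(2)) = \int_{(\cc S^1)^n} \prod_{e' \notin S} \indicatorthat{\norm{h_{e'}(x_S(\theta))}_2 > R_{e'}}\, d\theta,
\end{equation*}
and the total volume $\vol(P_{\cc H}(2))$ is the sum of these torus integrals over $S \in \cc B(M_{\cc H})$.

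For the recursion, fix a non-loop $e_0 \in \cc H$ and set $\cc H' = \cc H \setminus \{e_0\}$ and $\cc H'' = \cc H / \{e_0\}$, the restriction of $\cc H$ to $\ker h_{e_0} \cong \C^{n-1}$. Assuming first that $e_0$ is neither a loop nor a coloop, $\cc H'$ is essential of rank $n$ and $\cc H''$ is essential of rank $n-1$. Multiplying $\chi_{\cc H}(0) = \chi_{\cc H'}(0) - \chi_{\cc H''}(0)$ by $(-2\pi)^n$ and invoking the inductive hypothesis converts the claim to
\begin{equation*}
  \vol(P_{\cc H}(2)) = \vol(P_{\cc H'}(2)) + 2\pi \, \vol(P_{\cc H''}(2)),
\end{equation*}
where the $+2\pi$ factor compensates for the drop in rank from $n$ to $n-1$. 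I would establish this by computing the limit $R_{e_0} \to 0^{+}$ of the left-hand side. Bases $S \not\ni e_0$ of $M_{\cc H}$ correspond to bases of $M_{\cc H'}$, and in the limit the constraint $\norm{h_{e_0}(x_S(\theta))}_2 > R_{e_0}$ becomes trivial outside a measure-zero set, contributing $\vol(P_{\cc H'}(2))$. Bases $S \ni e_0$ correspond to bases of $M_{\cc H''}$ via $S \mapsto S \setminus \{e_0\}$, and in the limit a change of variable concentrates the configuration on $\ker h_{e_0}$ while $\theta_{e_0} \in \cc S^1$ separates out as a free factor contributing $2\pi$.

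For the base case $\cc H = \{h\}$ in $\C$ there is a unique base and $\vol(P_{\{h\}}(2)) = 2\pi$, matching $(-2\pi)\chi_{\{h\}}(0) = (-2\pi)(-1) = 2\pi$ since $\chi_{\{h\}}(t) = t-1$. Invariance of the volume in the radii then follows from the recursion together with induction, since the right-hand side depends only on the strictly smaller arrangements $\cc H'$ and $\cc H''$ and not on $R_{e_0}$. The case where $e_0$ is a coloop must be handled separately: then $\cc H'$ is no longer essential, so both $\chi_{\cc H'}(0) = 0$ and $\vol(P_{\cc H'}(2)) = 0$ (no choice of angles on $(\cc S^1)^n$ produces a valid polymer, since the torus dimension exceeds the rank of the parameterizing functionals), and the recursion still closes.

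The main obstacle is controlling the $R_{e_0} \to 0^{+}$ limit of the stratified volume. One must identify the measure-theoretic pushforward of the polymer parameterization onto the contracted arrangement $\cc H''$, computing the Jacobian of the restriction map explicitly, and argue that the set of degenerate configurations where several $h_{e'}(x)$ simultaneously touch their respective $R_{e'}$-sphere has measure zero on the torus. A cleaner alternative, closer in spirit to the Kenyon-Winkler argument cited in \Cref{sec:introduction}, would be to prove invariance in $\{R_e\}$ directly by tracking how configurations migrate between adjacent strata $P_{\cc H}^S(2)$ and $P_{\cc H}^{S'}(2)$ as $R_{e_0}$ crosses critical values, with $S$ and $S'$ related by base-exchange along the fundamental circuit of $e_0$ with respect to $S$ (see \Cref{prop:Fund-Circuit}), and then evaluate at a convenient limit.
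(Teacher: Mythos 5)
First, a remark on scope: the paper does not prove \Cref{thm:MP} at all --- it is imported verbatim as a special case of \cite[Theorem~1]{MP} (with the braid case credited to \cite{KW}) --- so there is no internal proof to compare against. Judging your proposal on its own terms, it contains a genuine circularity at the decisive point. Your deletion--contraction identity
$\vol(P_{\cc H}(2)) = \vol(P_{\cc H'}(2)) + 2\pi\,\vol(P_{\cc H''}(2))$
is only ever established for the quantity $\lim_{R_{e_0}\to 0^{+}}\vol(P_{\cc H}(2))$. To conclude anything about $\vol(P_{\cc H}(2))$ at the actual positive radius $R_{e_0}$ you must already know that the volume does not depend on $R_{e_0}$ --- and that independence \emph{is} the invariance lemma, i.e., essentially the whole content of the theorem (the evaluation at degenerate radii being the easy part). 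Your closing claim that ``invariance of the volume in the radii then follows from the recursion'' inverts the logical order: the recursion, read as a statement about the volume at the given radii, presupposes invariance. What you describe as a ``cleaner alternative'' --- showing $\partial_{R_{e_0}}\vol(P_{\cc H}(2))=0$ by matching the volume lost and gained by adjacent strata $P^{S}_{\cc H}$ and $P^{S'}_{\cc H}$ under basis exchange along the fundamental circuit of $e_0$ --- is not an alternative but the indispensable first step; it is precisely the Kenyon--Winkler/M\'esz\'aros--Postnikov argument, and only after it is in place does your limit $R_{e_0}\to 0^{+}$ (or the safe-basis evaluation \eqref{eq:HM-CP-Bases}) legitimately compute the constant value.

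The surrounding scaffolding is mostly sound: the torus parameterization of each stratum, the measure-zero status of the degenerate locus, the sign bookkeeping $(-2\pi)^{n}\chi_{\cc H'}(0)-(-2\pi)^{n}\chi_{\cc H''}(0)=\vol(P_{\cc H'}(2))+2\pi\,\vol(P_{\cc H''}(2))$ accounting for the rank drop, and the need to treat coloops separately (the identity $\chi_{M}(0)=\chi_{M\setminus e}(0)-\chi_{M/e}(0)$ fails there) are all correct. Two smaller gaps remain even granting invariance: (i) you need right-continuity of the volume at $R_{e_0}=0$, which requires a dominated-convergence argument on the torus together with the observation that $\theta\mapsto h_{e'}(x_{S}(\theta))$ hits the critical spheres on a null set; and (ii) you must specify which radii the contracted arrangement $\cc H''$ inherits --- the restrictions of the $h_{e'}$ to $\ker h_{e_0}$ are only defined up to normalization and may produce parallel elements --- though by the inductive invariance this choice is ultimately immaterial.
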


For $\cc H$ the braid arrangement $\cc B_{n}$ \Cref{thm:MP} was first
proven in~\cite{KW}.

\subsection{Mayer coefficients and their generalizations}
\label{sec:MMC}
The dimensional reduction formula, \Cref{thm:BI}, relates the pressure
of a hard sphere gas in $d$ dimensions to the partition function of
branched polymers in $d+2$ dimensions. The
\emph{Mayer expansion} for the pressure, which is recalled in \Cref{sec:BI-DR}
below, gives a relationship between the \emph{Mayer coefficients}
of the hard sphere gas and branched polymers. This section defines a
generalization of Mayer coefficients that are associated to a central
essential hyperplane arrangement $\cc H$.

\subsubsection{Matroidal Mayer coefficients}
\label{sec:MMC-Def}
Let $H\subset E(M_{\cc H})$ be a spanning set of the matroid
$M_{\cc H}$, and assume $\cc H$ is complexified. The $d$-dimensional
matroidal Mayer coefficient (MMC) associated to $H$ is the number
\begin{equation}
  \label{eq:MMC}
  \int_{\R^{dn}} \prod_{e\in H}\ob{
  -\indicatorthat{\norm{h_{e}(x)}_{2}\leq R_{e}}} \, dx.
\end{equation}
Note that this is a finite number: because $H$ is a spanning set each
$x_{i}$ is constrained to lie in a bounded subset of $\R^{d}$. For
non-complexified arrangements the definition of the matroidal Mayer
coefficients is the same, with $\R^{d}$ replaced by $\C^{d}$
in~\eqref{eq:MMC}. When $d=0$ the convention that $\R^{0}$ is a
one-point space means~\eqref{eq:MMC} is equal to $(-1)^{\abs{H}}$. The
left-hand side of \Cref{thm:Intro-Main} can now be made precise:
\begin{definition}
  \label{def:Gen-Pressure}
  Let $\vec{\cc H} = (\cc H_{n})_{n\geq 1}$ be a sequence of central
  essential hyperplane arrangements. The \emph{pressure} of the
  sequence $\vec{\cc H}$ in $\R^{d}$ is
  \begin{equation}
    \label{eq:Gen-Pressure}
    p^{(d)}_{\vec{\cc H}}(z) = \sum_{n\geq 1} \frac{z^{n}}{n!}\mathop{\sum_{H\subset
        E(M_{\cc H_{n}})}}_{\rank(H) = n} \int_{\R^{dn}} \prod_{e\in H}
    -\indicatorthat{\norm{h_{e}(x)}_{2}\leq R_{e}} \, dx,
  \end{equation}
  which is to be interpreted as a formal power series if convergence is
  not known.
\end{definition}

\subsubsection{Geometric interpretation}
\label{sec:MMC-Geometric}
Following \Cref{sec:HP-Geometric} there is a natural geometric
interpretation of the $d$-dimensional MMC associated to a subset
$H$. To each hyperplane $e$ is associated the $R_{e}$-thickening of the
subspace $\{x \mid h_{e}(x)=0\}$ in $\R^{nd}$. The MMC associated to a
spanning set $H$ is the (signed) volume of the intersection of the
thickened subspaces corresponding to $e\in H$.

\section{Dimensional reduction formulas for $\cc H$-polymers}
\label{sec:DR-Gen}

\subsection{Proof of dimensional reduction formulas}
\label{sec:DR-BP}
In this section we prove a dimensional reduction formula for $\cc
H$-polymers when $\cc H$ is an essential arrangement. We
standardize to $R_{e}=1$ for each $e\in \cc H$. 

Let $\Omega^{d-1}_{a}$ denote the surface measure on the sphere of
radius $\sqrt{a}$ in $\R^{d}$, and let $\lambda^{d}_{B}$ denote
Lebesgue measure on the unit ball in $\R^{d}$. The next lemma, which
states that a codimension $2$ projection of the surface measure on the
unit sphere in $\R^{d+2}$ is the uniform measure on the unit ball in
$\R^{d}$, is a well-known calculation, and the proof is omitted.

\begin{lemma}[Generalized Archimedes's theorem]
  \label{lem:Archimedes}
  Let $(w,y)\in \cc S^{d+1}\subset \R^{d+2}$, $w\in \R^{2}$, $y\in
  \R^{d}$. Then
  \begin{equation}
    \label{eq:Archimedes}
    d\Omega^{d+1}_{1}(w,y) =
    d\lambda^{d}_{B}(y)d\Omega^{1}_{1-\norm{y}_{2}^{2}}(w).
  \end{equation}
\end{lemma}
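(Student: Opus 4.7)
The plan is to parametrize $\cc S^{d+1}$ in coordinates adapted to the orthogonal splitting $\R^{d+2}=\R^2\oplus \R^d$ and directly compute the induced surface area element. Define $\Phi\colon B^d\times[0,2\pi)\to \cc S^{d+1}$ by
\[
\Phi(y,\theta)=\bigl(\sqrt{1-\|y\|_2^2}\cos\theta,\ \sqrt{1-\|y\|_2^2}\sin\theta,\ y_1,\dots,y_d\bigr),
\]
which is a bijection onto $\cc S^{d+1}$ off a set of measure zero. It therefore suffices to verify the claimed identity pulled back through $\Phi$.

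The main step is to compute the Gram matrix $G$ of the tangent vectors $\partial_\theta\Phi,\partial_{y_1}\Phi,\dots,\partial_{y_d}\Phi$. Two observations structure the calculation. First, $\partial_\theta\Phi$ is orthogonal to every $\partial_{y_i}\Phi$: in the first two coordinates $\partial_\theta\Phi$ points tangentially to the $w$-circle of radius $\sqrt{1-\|y\|_2^2}$ while $\partial_{y_i}\Phi$ points radially, and in the remaining coordinates $\partial_\theta\Phi$ vanishes. Second, a direct differentiation yields $\|\partial_\theta\Phi\|_2^2=1-\|y\|_2^2$ together with the $d\times d$ lower-right block $\tilde G=I_d+\frac{1}{1-\|y\|_2^2}yy^{\top}$. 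Hence $G$ is block-diagonal with $\det G=(1-\|y\|_2^2)\det\tilde G$.

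The crux of the argument is the matrix determinant lemma: $\det\tilde G=1+\frac{\|y\|_2^2}{1-\|y\|_2^2}=\frac{1}{1-\|y\|_2^2}$, which gives $\det G=1$ identically, independent of $y$. The induced surface area element is therefore $\sqrt{\det G}\,dy\,d\theta=dy\,d\theta$. Unpacking the definitions, $dy$ is the pullback of $d\lambda^d_B(y)$ and $d\theta$ is the pullback of $d\Omega^1_{1-\|y\|_2^2}(w)$ on the fiber circle in the conventions of the paper, so the measures agree as claimed.

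The only non-trivial step is the Gram determinant calculation, which collapses to a single application of the matrix determinant lemma; everything else is routine bookkeeping. An alternative route would be to induct on $d$ from the classical ($d=1$) hat-box theorem of Archimedes by iterated disintegration, but the direct parametrization above is cleaner and makes the miraculous cancellation $(1-\|y\|_2^2)\cdot(1-\|y\|_2^2)^{-1}=1$ transparent.
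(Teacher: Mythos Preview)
The paper does not prove this lemma; it is stated as ``a well-known calculation, and the proof is omitted.'' Your parametrization argument is one of the standard proofs and the Gram-determinant computation is correct: the tangential/radial orthogonality gives the block structure, the matrix determinant lemma yields $\det\tilde G=(1-\norm{y}_2^2)^{-1}$, and hence $\det G=1$, so the induced surface element is $dy\,d\theta$.

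One point worth making explicit in your last step: read literally, ``surface measure on the circle of radius $\sqrt{a}$'' would pull back to $\sqrt{a}\,d\theta$, not $d\theta$, and with that reading the lemma would be off by a factor of $\sqrt{1-\norm{y}_2^2}$. Your hedge ``in the conventions of the paper'' is exactly right, though. A total-mass check already settles it (for $d=1$ your formula gives $2\pi\cdot\vol(B^{1})=4\pi$, the correct area of $\cc S^{2}$, whereas the arc-length reading gives $\pi^{2}$), and the way the lemma is consumed in the proof of \Cref{thm:DR} confirms it: the fibre measure $d\Omega_e(w)$ is identified with the measure used to compute the volume of planar polymers, which by the definition via the map $\phi$ to unit circles in \Cref{sec:HPBP-Construct} is the angular measure $d\theta$, independent of the radius. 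With that reading of $d\Omega^{1}_{a}$ your proof is complete.
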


\begin{theorem}
  \label{thm:DR}
  Let $\cc H$ be an essential central complexified hyperplane
  arrangement in $\C^{n}$. Then
  \begin{equation}
    \label{eq:DR}
    \int_{\R^{dn}} \mathop{\sum_{H \subset E(M_{\cc H})}}_{r(H)=n} \prod_{e\in H}
    -\indicatorthat{\norm{h_{e}(x)}_{2}\leq 1} \, dx = (-2\pi)^{n}\vol (P_{\cc H}(d+2)).
  \end{equation}
  If $\cc H$ is not complexified then~\eqref{eq:DR} still holds
  provided $d$ is even and $\R^{2dn}$ is identified with $\C^{dn}$.
\end{theorem}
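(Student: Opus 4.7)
The plan is to reduce the $(d+2)$-dimensional polymer integral on the right-hand side of~\eqref{eq:DR} to a product of a $d$-dimensional ``bulk'' integral and a $2$-dimensional ``transverse'' integral via \Cref{lem:Archimedes}, and then evaluate the transverse integral using the M\'esz\'aros--Postnikov invariance lemma (\Cref{thm:MP}). Throughout I would assume $\cc H$ is complexified; the non-complexified case is handled identically after replacing each $\R^{d}$ with $\C^{d}$.

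Starting from the integral representation $\vol(P_{\cc H}^{S}(d+2)) = \int I_{\cc H}^{S}(x)\,\prod_{e\in S} d\Omega^{d+1}(h_{e}(x))$ for each basis $S\in \cc B(M_{\cc H})$, I would split each copy of $\R^{d+2}$ as $\R^{2}\times \R^{d}$ and write $x=(w,y)$, so that by linearity $h_{e}(x) = (h_{e}(w), h_{e}(y))$. Applying \Cref{lem:Archimedes} to each spherical factor yields
\begin{equation*}
d\Omega^{d+1}(h_{e}(x)) = d\lambda_{B}^{d}(h_{e}(y))\, d\Omega^{1}_{1-\norm{h_{e}(y)}_{2}^{2}}(h_{e}(w)),
\end{equation*}
valid precisely when $\norm{h_{e}(y)}_{2}\leq 1$. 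Meanwhile, the hard-core constraints $\norm{h_{e'}(x)}_{2}>1$ for $e'\notin S$ become $\norm{h_{e'}(w)}_{2}^{2}>1-\norm{h_{e'}(y)}_{2}^{2}$, which is automatic whenever $\norm{h_{e'}(y)}_{2}>1$.

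Next I would swap the sum over bases $S$ with the $y$-integration. Setting $A(y) = \set{e\in \cc H : \norm{h_{e}(y)}_{2}\leq 1}$, only bases $S\subset A(y)$ contribute, and when $A(y)$ is spanning these are exactly the bases of the sub-arrangement $\cc H|_{A(y)}$. For fixed $y$, the residual $w$-integral becomes the planar $\cc H|_{A(y)}$-polymer volume with radii $R_{e}(y)=\sqrt{1-\norm{h_{e}(y)}_{2}^{2}}$; by \Cref{thm:MP} this equals $(-2\pi)^{n}\chi_{\cc H|_{A(y)}}(0)$ \emph{independently of the radii}, and vanishes together with $\chi_{\cc H|_{A(y)}}(0)$ when $A(y)$ does not span. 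Applying~\eqref{eq:HM-CP-Spanning},
\begin{equation*}
\chi_{\cc H|_{A(y)}}(0) = \mathop{\sum_{H\subset \cc H}}_{\rank(H)=n}(-1)^{\abs{H}}\prod_{e\in H}\indicatorthat{\norm{h_{e}(y)}_{2}\leq 1},
\end{equation*}
and integrating over $y\in \R^{dn}$ recovers exactly $(-2\pi)^{n}$ times the left-hand side of~\eqref{eq:DR}.

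The main obstacle I anticipate is the bookkeeping in the change of variables after splitting $x=(w,y)$. The intrinsic definition of $\vol(P_{\cc H}^{S})$ via the coordinates $(\phi_{e})_{e\in S}$ introduces basis-dependent Jacobians, and these must cancel uniformly across all bases $S$ in order for the sum over $S\subset A(y)$ to faithfully reproduce the planar $\cc H|_{A(y)}$-polymer measure on the $w$-side. A secondary delicacy is verifying that hyperplanes $e'\notin A(y)$ contribute trivially to the $w$-integral for any basis $S$, so that the effective sub-arrangement seen in the transverse integral is genuinely $\cc H|_{A(y)}$; this follows cleanly from the automatic fulfillment of $\norm{h_{e'}(w)}_{2}^{2}>1-\norm{h_{e'}(y)}_{2}^{2}$ noted above.
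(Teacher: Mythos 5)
Your proposal is correct and follows essentially the same route as the paper: split $x=(w,y)$, apply \Cref{lem:Archimedes}, evaluate the transverse $w$-integral for fixed $y$ as a planar polymer volume via \Cref{thm:MP}, and expand the resulting characteristic polynomial with \eqref{eq:HM-CP-Spanning}; the paper merely organizes the same steps by partitioning $\R^{dn}$ into regions $\Gamma_{G}$ (your $A(y)$ is the label of the region containing $y$) and meeting in the middle from both sides. Your anticipated Jacobian obstacle is not one, since $\vol(P_{\cc H}^{S})$ is \emph{defined} as the pushforward product measure in the coordinates $(h_{e}(x))_{e\in S}$, so no basis-dependent factors arise.
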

\begin{proof}
  It will be assumed $\cc H$ is a complexified arrangement;
  \emph{mutatis mutandis} the argument applies for arrangements
  that are not complexified. The proof manipulates each side of
  \Cref{eq:DR} separately and observes that the resulting expressions
  are the same.

  First, rewrite the left-hand side of \eqref{eq:DR} by subdividing
  the region of integration according to whether or not
  $\norm{h_{e}(x)}_{2}\leq 1$. Formally,
  \begin{align}
    \label{eq:DR-ROI}
    \R^{dn} &= \coprod_{G\subset E(M_{\cc H})} \Gamma_{G}, \\
    \label{eq:DR-Regions}
    \Gamma_{G} &= \{ x\in \R^{dn} \mid
    \textrm{$\norm{h_{e}(x)}_{2}\leq 1$ if and only if $e\in G$}\}, 
  \end{align}
  with the disjoint union in~\eqref{eq:DR-ROI} being over all subsets of
  the ground set of $M_{\cc H}$.

  Each summand $H$ in the left-hand side of \Cref{eq:DR} is a spanning
  set. The integral over any region $\Gamma_{G}$ is zero when $G$ is
  not a spanning set: in this case there is a normal to a hyperplane
  $e\in H$ with $e\notin G$, and the integral of
  $\indicatorthat{\norm{h_{e}(x)}_{2}\leq 1}$ over
  $\{x \mid \norm{h_{e}(x)}_{2}>1\}$ vanishes. The left-hand side of
  \Cref{eq:DR} can therefore be rewritten as:
  \begin{align}
    \label{eq:DR-1a}
    &\hspace{-16mm}
    \mathop{\sum_{G\subset E(M_{\cc H})}}_{r(G)=n} 
   \mathop{\sum_{H \subset E(M_{\cc H})}}_{r(H)=n} 
    \int_{\Gamma_{G}}\prod_{e\in H}
    -\indicatorthat{\norm{h_{e}(x)}_{2}\leq 1} \, dx \\
    \label{eq:DR-1c}
    &=
    \mathop{\sum_{G\subset E(M_{\cc H})}}_{r(G)=n} 
   \mathop{\sum_{H \subset G}}_{r(H)=n}
    \int_{\Gamma_{G}} (-1)^{\abs{H}}\, dx \\
   \label{eq:DR-1b}
      &= \mathop{\sum_{G\subset E(M_{\cc H})}}_{r(G)=n} \vol(\Gamma_{G})
      \chi_{G}(0).
  \end{align}
  The first equality follows as if $H$ is not a subset of $G$ the
  integral vanishes. The second equality follows from
  \eqref{eq:HM-CP-Spanning}. This concludes the manipulations of  the
  left-hand side of \Cref{eq:DR}.

  The second step is to perform the integrals in the right-hand side
  of \Cref{eq:DR}. The idea of how to do this is simple.  The volume
  is computed by first fixing the last $d$ coordinates of the points
  $x_{i}$ in a $\cc H$-polymer, and then integrating over the first
  two coordinates of each point. The integral over the first two
  coordinates can be expressed as the volume of a generalized planar
  polymer, and hence can be computed explicitly by an invariance
  lemma. Lastly we integrate over the last $d$ coordinates. This idea
  is similar to what was done for branched polymers in three
  dimensions in~\cite{KW}.

  As in \Cref{lem:Archimedes}, it will be convenient to write
  $x\in P_{\cc H}(d+2)$ as $x = (w,y)$, where
  $w = (w_{1},w_{2}, \dots, w_{n})\in \R^{2n}$,
  $y = (y_{1}, y_{2}, \dots, y_{n})\in \R^{dn}$, and
  $x_{i} = (w_{i},y_{i})\in \R^{d+2}$. To each point
  $x\in P_{\cc H}(d+2)$ associate a vector
  $(R^{\star}_{e}(y))_{e\in \cc H}$ of non-negative radii:
  \begin{equation}
    \label{eq:DR-2}
    R^{\star}_{e}(y)^{2} = (1-\norm{h_{e}(y)}_{2}^{2})\wedge 0,
  \end{equation}
  where $a\wedge b$ denotes the minimum of $a$ and $b$.  As the
  $h_{e}$ are linear functionals,
  $\norm{h_{e}(w,y)}_{2}^{2} = \norm{h_{e}(w)}_{2}^{2} +
  \norm{h_{e}(y)}_{2}^{2}$.
  It follows that if $S$ is a base of $\cc H$ and $x\in P_{\cc H}^{S}(d+2)$ then
  \begin{align}
    \label{eq:DR-2a}
    \norm{h_{e}(w)}_{2}^{2} 
    &= R_{e}^{\star}(y),  \quad\,\,\, e\in S, \\
    \label{eq:DR-2b}
    \norm{h_{e}(w)}_{2}^{2}
    &> R_{e}^{\star}(y)^{2}, \quad e\notin S,
  \end{align}
  since $\norm{h_{e}(w,y)}_{2}^{2}$ equals $1$ for $e\in S$, and is
  greater than $1$ for $e\notin S$. \Cref{lem:Archimedes} gives a
  concrete expression for $\vol(P^{S}_{\cc H})$ when $S$ is a
  base. Letting $d\Omega_{e}(w)$ denote
    $d\Omega^{1}_{R^{\star}_{e}(y)^{2}} (h_{e}(w))$ and
    $d\lambda_{e}(y)$ denote $d\lambda^{d}_{B}(h_{e}(y))$, the expression is
  \begin{equation}
   \nonumber
                          \int_{P^{S}_{\cc H}} \prod_{e\in S} d\Omega^{d+1}_{1}(\phi_{e}(x)) \\
   =
      \int_{\R^{(d+2)n}}\!\prod_{e\in \cc H\setminus S}\!
      \indicatorthat{ \norm{h_{e}(w,y)}_{2}^{2}\geq 1}
      \prod_{e\in S} d\Omega_{e}(w)d\lambda_{e}(y).
    \end{equation}
    As before it is helpful to decompose the region of integration:
    \begin{equation}
      \label{eq:DR-ROI-2}
      \R^{(d+2)n} = \coprod_{G\subset E(M_{\cc H})} \R^{2n}\times \Gamma_{G},
    \end{equation}
    where $\Gamma_{G}$ is the subset of $y$-coordinates defined as in
    \Cref{eq:DR-Regions}.  In the rest of the proof we abbreviate
    $G\subset E(M_{\cc H})$ to $G\subset \cc H$. An argument similar
    to the one leading to~\eqref{eq:DR-1a} shows that the integral
    over a region $\R^{2n}\times\Gamma_{G}$ can be non-zero only if
    $S\subset G$, and hence the volume is given by
    \begin{equation}
      \label{eq:DR-6}
      \sum_{G\colon S\subset G}
        \int_{\R^{2n}\times \Gamma_{G}}
      \prod_{e\in \cc H \setminus S}
      \indicatorthat{ \norm{h_{e}(w)}_{2}^{2}> R_{e}^{\star}(y)^{2}}
      \prod_{e\in S} d\Omega_{e}(w) d\lambda_{e}(y).
    \end{equation}

    Fix $G$ containing $S$, so $G$ is rank $n$. If $e\notin G$ then
    $y\in \Gamma_{G}$ implies $R_{e}^{\star}(y)=0$. The non-trivial
    constraints in \eqref{eq:DR-6} on $w$ therefore correspond to
    hyperplanes $e\in G$. Letting $\cc H_{G}$ denote the hyperplane
    arrangement consisting of hyperplanes in $G$ this implies the
    first product in \eqref{eq:DR-6} can be restricted to $e\in \cc
    H_{G}\setminus S$. Summing \eqref{eq:DR-6} over all bases $S$ to
    compute $\vol(P_{\cc H}(d+2))$ results in
    \begin{equation}
      \label{eq:DR-8}
      \mathop{\sum_{G\subset \cc
          H}}_{\rank(G) = n} \sum_{S\in \cc B(M_{\cc H_{G}})}
      \int_{\R^{2n}\times \Gamma_{G}}  \prod_{e\in \cc H_{G} \setminus S}
      \!\!\indicatorthat{ \norm{h_{e}(w)}_{2}^{2}> R_{e}^{\star}(y)}
      \prod_{e\in S} d\Omega_{e}(w) d\lambda_{e}(y).
    \end{equation}
    The sum over $S\in \cc B(M_{\cc H_{G}})$ of the integrals over $w$
    in \eqref{eq:DR-8} are, for any fixed $y$, precisely the volume of
    planar $\cc H_{G}$ polymers with radii $R^{\star}_{e}(y)$ for
    $e\in \cc H_{G}$. By \Cref{thm:MP}
    \begin{align}
      \label{eq:DR-10}
      \vol(P_{\cc H}(d+2)) &= \mathop{\sum_{G\subset \cc H}}_{\rank(G) = n} \int_{y\in
        \Gamma_{G}} (-2\pi)^{n}\chi_{G}(0)
        \prod_{e\in S} d\lambda^{d}_{B}(h_{e}(y)) \\
        &= \mathop{\sum_{G\subset M_{\cc
            H}}}_{\rank(G) = n} (-2\pi)^{n}\chi_{G}(0)\vol(\Gamma_{G}),
    \end{align}
    which is exactly~\eqref{eq:DR-1b}.
\end{proof}

\begin{proof}[Proof of \Cref{thm:Intro-Main}]
  \Cref{thm:Intro-Main} has been made precise by
  \Cref{def:HPoly-Z,def:Gen-Pressure}. To prove the theorem, apply
  \Cref{thm:DR} to each arrangement $\cc H_{n}$ in the sequence
  $(\cc H_{n})_{n\in \N}$, multiply each term by $\frac{z^{n}}{n!}$,
  and sum over $n$.
\end{proof}

\subsection{Laws of projections}
\label{sec:laws-projections}

The proof of \Cref{thm:DR} established more than was stated. The
entire proof can be conducted without computing the integrals over
the last $d$ coordinates, i.e., with $y$ fixed. This implies the
law of a $d$-dimensional projection of a $(d+2)$-dimensional
$\cc H$-polymer is given by the law of the MMC
coefficients. Formally,
\begin{corollary}
  \label{cor:Projection-Law}
  Let $g$ be a function of the last $d$ coordinates in $\R^{d+2}$,
  integrable with respect to the law of $\cc H$-polymers, where $\cc
  H$ is an essential central complexified arrangement of rank
  $n$. Then
  \begin{equation}
    \nonumber
    \int_{P_{\cc H}(d+2)}g(y)\,d\vol(w,y) =
    (-2\pi)^{n}\!\!\! \mathop{\sum_{G\subset E(M_{\cc H})}}_{\rank(G) =  n} 
    \int_{\R^{dn}} g(y)\prod_{e\in G} -\indicatorthat{\norm{h_{e}(y)}\leq 1}\,dy.
  \end{equation}
\end{corollary}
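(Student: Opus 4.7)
The plan is to repeat the proof of \Cref{thm:DR} essentially verbatim, but carrying the factor $g(y)$ through every step and omitting the final integration over $y$. Since $g$ depends only on the $y$-coordinates of $(w,y)$, it is inert with respect to every $w$-integration encountered in the proof, and can be pulled outside those integrals without affecting any of the geometric or combinatorial manipulations.

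Concretely, I would start from the injective surface-measure expression for $\vol(P^{S}_{\cc H}(d+2))$ with the extra factor $g(y)$, apply \Cref{lem:Archimedes} to split each spherical measure on $\cc S^{d+1}$ into a one-dimensional surface measure in $w$ and a ball measure in $y$, and decompose the $y$-region of integration along the partition $\R^{dn} = \coprod_{G \subset E(M_{\cc H})} \Gamma_{G}$ as in \eqref{eq:DR-ROI-2}. For each rank-$n$ subset $G \supset S$ and each fixed $y \in \Gamma_{G}$, the resulting inner $w$-integral is exactly a planar $\cc H_{G}$-polymer integral with radii $R^{\star}_{e}(y)$. Summing over bases $S \in \cc B(M_{\cc H_{G}})$ and applying the M\'esz\'aros--Postnikov invariance lemma (\Cref{thm:MP}) evaluates this inner integral to $(-2\pi)^{n}\chi_{\cc H_{G}}(0)$, which is independent of $y$, giving
\begin{equation*}
  \int_{P_{\cc H}(d+2)} g(y)\, d\vol(w,y) = (-2\pi)^{n} \mathop{\sum_{G \subset E(M_{\cc H})}}_{\rank(G) = n} \chi_{\cc H_{G}}(0) \int_{\Gamma_{G}} g(y)\, dy.
\end{equation*}

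To match the stated right-hand side, I would expand $\chi_{\cc H_{G}}(0)$ via \eqref{eq:HM-CP-Spanning} as $\sum_{H \subset G,\, \rank(H) = n} (-1)^{|H|}$, swap the order of the sums over $G$ and $H$, and use the identity $\sum_{G \supset H,\, \rank(G) = n} \indicatorthat{y \in \Gamma_{G}} = \prod_{e \in H} \indicatorthat{\norm{h_{e}(y)}_{2} \leq 1}$, which follows because $y \in \Gamma_{G}$ records precisely the set of hyperplanes $e$ for which $\norm{h_{e}(y)}_{2} \leq 1$. This collapses the sum to a single integral over $\R^{dn}$, yielding the right-hand side of the corollary; this combinatorial rearrangement is exactly the one taking \eqref{eq:DR-1c} to \eqref{eq:DR-1b} in the proof of \Cref{thm:DR}, performed in reverse. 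The main obstacle is the underlying bookkeeping: one must verify that integrability of $g$ against the $\cc H$-polymer volume measure legitimizes Fubini and the various interchanges of finite sums and integrals, and that the restriction of the base summation to $S \in \cc B(M_{\cc H_{G}})$ (rather than to all bases of $\cc H$) is correctly inherited from the rank-$n$ condition on $G$, as was already verified in the proof of \Cref{thm:DR}.
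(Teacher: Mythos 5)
Your proposal is correct and is essentially the argument the paper intends: rerun the proof of \Cref{thm:DR} with the $y$-dependent factor $g(y)$ carried through (it is inert for all $w$-integrations and for the M\'esz\'aros--Postnikov invariance step, whose output $(-2\pi)^{n}\chi_{\cc H_{G}}(0)$ is independent of $y$), arriving at $\sum_{G}\chi_{\cc H_{G}}(0)\int_{\Gamma_{G}}g(y)\,dy$, and then reverse the passage from \eqref{eq:DR-1a} to \eqref{eq:DR-1b} to recover the MMC form. The identity $\sum_{G\supset H,\ \rank(G)=n}\indicatorthat{y\in\Gamma_{G}}=\prod_{e\in H}\indicatorthat{\norm{h_{e}(y)}_{2}\leq 1}$ you invoke is valid since $\rank(H)=n$ forces $\rank(G(y))=n$ automatically, so the rearrangement goes through.
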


The previous corollary is unnatural since the MMC are
distributed according to a signed measure. This can be improved. Given
a set $E$, let $E^{<}$ denote the set of linear orders on $E$. We call
a function $f\colon \R^{d} \to E^{<}$ an \emph{ordering function}.

\begin{corollary}
  \label{cor:Projection-Law-Safe}
  Let $g$ be a function of the last $d$ coordinates in $\R^{d+2}$,
  integrable with respect to the law of $\cc H$-polymers, where $\cc
  H$ is an essential central complexified arrangement of rank
  $n$. Let $f$ be an ordering function. 
  \begin{equation}
    \label{eq:Projection-Law-Safe}
    \int_{P_{\cc H}(d+2)}g(y)\,d\vol(w,y) =  (-2\pi)^{n} \sum_{S\in \cc B(M_{\cc H})}
      \int_{\bigcup_{G}\Gamma_{G}} g(y) \indicatorthat{\textrm{$S$
         is $f(y)$ safe}}\, dy,
  \end{equation}
  where the union in the region of integration is over all $G\subset
  \cc H$ such that $\rank(G)=n$.
\end{corollary}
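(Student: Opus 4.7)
The plan is to derive this corollary from \Cref{cor:Projection-Law} by fibrewise application of the activity expansion \eqref{eq:HM-CP-Bases} of the characteristic polynomial. The mechanism is exactly the refinement that \eqref{eq:HM-CP-Bases} offers over \eqref{eq:HM-CP-Spanning}, pulled inside the $y$-integral.

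First I would decompose the region of integration in \Cref{cor:Projection-Law} via the same stratification $\R^{dn} = \coprod_{G' \subset E(M_{\cc H})} \Gamma_{G'}$ used in the proof of \Cref{thm:DR}. For $y \in \Gamma_{G'}$, the indicator $\indicatorthat{\norm{h_e(y)} \leq 1}$ reduces to $\indicatorthat{e \in G'}$, so $\prod_{e \in G} -\indicatorthat{\norm{h_e(y)} \leq 1}$ equals $(-1)^{|G|}$ if $G \subset G'$ and vanishes otherwise. Summing over spanning $G$ and invoking \eqref{eq:HM-CP-Spanning} for the submatroid $M_{\cc H_{G'}}$ (which has rank $n$ precisely when $G'$ is spanning), the inner sum collapses to $\chi_{G'}(0)$ for each fixed $y \in \Gamma_{G'}$.

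Next I would invoke the activity representation \eqref{eq:HM-CP-Bases} for $M_{\cc H_{G'}}$, using the linear order obtained by restricting $f(y)$ to $G'$. Since $\rank(G') = n$, the bases of $M_{\cc H_{G'}}$ are exactly the $S \in \cc B(M_{\cc H})$ with $S \subset G'$, and the fundamental circuit of any $e \in G' \setminus S$ relative to $S$ is the same whether computed in $M_{\cc H_{G'}}$ or in $M_{\cc H}$. Thus, pointwise in $y$,
\[
\chi_{G'}(0) = \sum_{S \in \cc B(M_{\cc H}),\, S \subset G'} \indicatorthat{S \text{ is } f(y) \text{ safe in } M_{\cc H_{G'}}}.
\]
Substituting this identity, multiplying by $g(y)$, integrating over $\Gamma_{G'}$, summing over spanning $G'$, and finally exchanging the order of summation produces the right-hand side of \eqref{eq:Projection-Law-Safe}: for fixed $S \in \cc B(M_{\cc H})$, the admissible $G'$ are those containing $S$, and the union $\coprod_{G' \supset S} \Gamma_{G'}$ sits inside $\bigcup_{G} \Gamma_{G}$ with $\rank(G) = n$, as required.

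The main obstacle is notational rather than analytic: one must pin down that the phrase ``$S$ is $f(y)$ safe'' in the statement refers to external activity measured within the sub-arrangement $\cc H_{G(y)}$ (equivalently, using $f(y)$ restricted to $G(y) = \{e : \norm{h_e(y)} \leq 1\}$), and incorporates the tacit condition $S \subset G(y)$. Once this convention is fixed, no further analysis is needed beyond \Cref{cor:Projection-Law} and \eqref{eq:HM-CP-Bases}; the rest is a routine Fubini-style rearrangement that converts the signed sum over spanning sets into an unsigned sum over bases safe with respect to a $y$-dependent order.
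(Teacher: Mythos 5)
Your proof is correct and follows essentially the same route as the paper: insert the activity representation \eqref{eq:HM-CP-Bases} into the decomposition \eqref{eq:DR-1b} underlying \Cref{cor:Projection-Law} and exchange the order of summation over bases and spanning sets. Your remark that ``$S$ is $f(y)$ safe'' must be read relative to the submatroid $M_{\cc H_{G(y)}}$, with the tacit condition $S\subset G(y)$, is a genuine clarification of an ambiguity the paper's two-sentence proof leaves implicit.
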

\begin{proof}
  \Cref{eq:HM-CP-Bases} implies that for any ordering function
  $f$ and any $y$
  \begin{equation}
    \label{eq:Safe-x}
    \chi_{M}(0) = \sum_{S\in \cc B(M)} \indicatorthat{\textrm{$S$ is
        $f(y)$ safe}}.
  \end{equation}
  Inserting this expression into \Cref{eq:DR-1b} gives the corollary,
  as it is a rewriting of \Cref{cor:Projection-Law}.
\end{proof}

\begin{remark}
  \label{rem:Proj-1}
  For branched polymers in $d=3$ \Cref{cor:Projection-Law-Safe} was
  established for a particular ordering function $f$ in~\cite{KW}.
\end{remark}

\begin{remark}
  \label{rem:Proj-2}
  There is a notion of \emph{embedding activity} for
  embedded graphs $G$ due to Bernardi~\cite{Bernardi}, who has shown
  that~\eqref{eq:HM-CP-Bases} holds when external activity is replaced
  with external embedding activity. Bernardi's result, together with
  an argument as in \Cref{cor:Projection-Law-Safe}, yields an
  explicit probability law for the $2d$-projection of $4d$-branched
  polymers. 
\end{remark}

\subsection{Non-spherical bodies}
\label{sec:APP}

\Cref{thm:DR} did not make essential use of the fact that the measure
on $\cc H$-polymers was induced from the surface measure on unit
spheres. The key ingredient was only that the surface measure
factorized into a product of the surface measure on $\cc S^{1}$ and
Lebesgue measure on the codimension $2$ projection. The next
definition introduces a class of non-spherical objects for which the
proof of \Cref{thm:DR} applies. The definition is a specialization of
more general concepts introduced in~\cite{CollDoddHarrison}, which
studies when generalizations of \Cref{lem:Archimedes} hold.

\begin{definition}
  \label{def:ASA}
  A \emph{spherical array} in $\R^{d}$ is a hypersurface
  $\asa = \cc S^{1}\times_{\warp}\asabase{\asa}$, where 
  \begin{equation}
    \label{eq:SA}
    \cc S^{1}\times_{\warp}\asabase{\asa} = \{ x\in \R^{d} \mid x_{1}^{2}
    + x_{2}^{2} = \cb{\warp(x_{3}, \dots, x_{d})}^{2}\}.
  \end{equation}
  The function $\warp\colon \R^{d-2} \to \co{0,\infty}$ is the
  \emph{warping function} and $\asabase{\asa}$ is the \emph{bottom}
  of $\asa$.  Let $\pi_{d-2}$ denote the orthogonal projection from
  $\R^{2}\times \R^{d-2}\to \R^{d-2}$ defined by $\pi_{d-2}(w,y)=y$.
  A spherical array is an \emph{Archimedean spherical array (ASA)} if
  for all measurable $U\subset \asabase{\asa}$
  \begin{equation}
    \label{eq:APP}
    \Omega(\pi^{-1}_{d-2}(U)) = \vol(U),
  \end{equation}
  where $\Omega$ is the surface measure on $\asa$ induced from
  $\R^{d}$, and $\vol$ is Lebesgue measure on $\R^{d-2}$. 
\end{definition}

The warping function of an ASA must be rather special,
see~\cite{CollDoddHarrison}. Several ASAs are well-known.

\begin{example}
  \label{ex:ASA-Arch}
  Spheres $\cc S^{d-1}$ are ASAs, with bottom
  the unit ball $B^{d-2}$ in $\R^{d-2}$ and warping function
  $\sqrt{1-x_{d-1}^{2}-x_{d}^{2}}$. This is the content of
  \Cref{lem:Archimedes}.
\end{example}
\begin{example}
  \label{ex:ASA-Cyl}
  Cylinders $\cc S^{d-2}\times I$ with $I$ an interval in $\R$ are
  ASAs with base $B^{d-3}\times I$ the solid cylinder in
  $\R^{d-2}$. This follows from writing $\cc S^{d-2}$ as a warped
  product $\cc S^{1}\times_{\warp} B^{d-3}$ as in the previous
  example, and noting this gives a warped product $\cc
  S^{1}\times_{\warp} (B^{d-3}\times I)$, where the warping function
  is independent of the coordinate in $I$.
\end{example}
\begin{example}
  \label{ex:ASA-Spherical-Cyl}
  Spherically capped cylinders, i.e., the boundary of
  $\cc B^{d-1}\times I$, are ASAs. This follows by combining the last
  two examples.
\end{example}

Dimensional reduction formulas for ASAs require defining the
associated spaces of polymers and Mayer coefficients. The remainder of
this section indicates these definitions, with the conclusion being
the next theorem. Once the definitions are given the proof is,
\emph{mutatis mutandis}, the same as the proof of \Cref{thm:DR}, and
hence it is omitted.
\begin{theorem}
  \label{thm:ASA-DR}
  \Cref{thm:DR} holds for Archimedean spherical arrays.
\end{theorem}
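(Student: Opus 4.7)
The plan is to mimic the proof of \Cref{thm:DR}, replacing the unit sphere at each hyperplane constraint with an arbitrary ASA. First I would fix, for each $e \in \cc H$, an ASA $\asa_e = \cc S^{1}\times_{\warp_{e}}\asabase{\asa_e}$ in $\R^{d+2}$, and define ASA-polymers $P_{\cc H,\asa}(d+2)$ as well as ASA-Mayer coefficients by replacing the constraint $\norm{h_{e}(x)}_{2}=1$ (respectively $\leq 1$) by the condition $h_{e}(x)\in \asa_{e}$ (respectively $h_{e}(x)$ lies in the region enclosed by $\asa_{e}$), and similarly updating the exterior condition defining $\disj(e)$.

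I would then run the same two-sided computation as in \Cref{thm:DR}. On the Mayer side the partition of $\R^{dn}$ into regions $\Gamma_{G}^{\asa}$, now indexed by which hyperplanes have $h_{e}(y)$ lying in the bottom $\asabase{\asa_{e}}$, yields, exactly as in \eqref{eq:DR-1b}, the expression $\sum_{G}\vol(\Gamma_{G}^{\asa})\chi_{G}(0)$ where the sum is over rank-$n$ subsets $G\subset \cc H$. This step is purely combinatorial and uses only \eqref{eq:HM-CP-Spanning}, so no ASA-specific input is needed.

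On the polymer side I would write $x=(w,y)$ with $w\in \R^{2n}$ and $y\in \R^{dn}$ and, for each hyperplane $e$, replace the radius \eqref{eq:DR-2} by $R_{e}^{\star}(y) = \warp_{e}(h_{e}(y))$ whenever $h_{e}(y)\in \asabase{\asa_{e}}$ (and $0$ otherwise). The substitute for \Cref{lem:Archimedes} is the defining identity \eqref{eq:APP} of an ASA, which guarantees that the surface measure on $\asa_{e}$ factors into Lebesgue measure on $\asabase{\asa_{e}}$ times the surface measure on $\cc S^{1}$ of radius $\warp_{e}(h_{e}(y))$. With this factorization in hand, I would fix $y$ and apply the M\'esz\'aros-Postnikov invariance lemma (\Cref{thm:MP}) to the resulting planar polymer in the $w$-variables with radii $\warp_{e}(h_{e}(y))$. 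This still produces the factor $(-2\pi)^{n}\chi_{G}(0)$ appearing in \eqref{eq:DR-10}; integrating over $y$ and summing over $G$ then matches the Mayer-side expression.

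The main obstacle is the bookkeeping around the warping functions: one has to verify that the decomposition of $y$-space into the $\Gamma_{G}^{\asa}$ aligns with the supports $\asabase{\asa_{e}}$ of the $\warp_{e}$ so that the analogues of \eqref{eq:DR-2a}--\eqref{eq:DR-2b} hold pointwise in $y$. Once this is in place the rest goes through unchanged, because \eqref{eq:APP} plays the role of \Cref{lem:Archimedes} as a black box and the planar half of the argument is insensitive to the ASA structure. This is why the author can rightly say that the proof is \emph{mutatis mutandis} the same.
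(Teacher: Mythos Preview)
Your proposal is correct and matches the paper's approach exactly: the paper omits the proof, stating only that once the ASA versions of $\cc H$-polymers and matroidal Mayer coefficients are defined (your first paragraph), the argument is \emph{mutatis mutandis} that of \Cref{thm:DR}, with the ASA identity~\eqref{eq:APP} replacing \Cref{lem:Archimedes}. Your outline spells out precisely these changes, including the correct redefinition of $R_{e}^{\star}(y)$ as $\warp_{e}(h_{e}(y))$ and the partition of $y$-space by membership of $h_{e}(y)$ in $\asabase{\asa_{e}}$; note only that the paper's MMC of type~$\asa$ is defined directly via the indicator $\indicatorthat{h_{e}(x)\in\asabase{\asa_{e}}}$ in $\R^{dn}$ rather than via ``the region enclosed by $\asa_{e}$'' in $\R^{d+2}$, which is the sharper formulation you arrive at in your second paragraph anyway.
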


\begin{remark}
  In the case of the braid arrangment and open cylinders,
  \Cref{thm:ASA-DR} follows by the methods of~\cite{BI}.
\end{remark}

Let $\cc H = \{h_{e}\}_{e\in E}$ be a hyperplane arrangement in
$\R^{n}$. Associate to each $e\in E$ an ASA $\asa_{e}$; by a slight
abuse of notation write $\asa$ for this set of ASAs. Define subsets
of $\R^{dn} = \{(w,y)\mid w\in \R^{2n},y\in \R^{(d-2)n}\}$ by
\begin{align*}
  \link(e) &= \{(w,y) \mid h_{e}(w,y) \in \asa_{e}\}, \\
  \disj(e) &= \{ (w,y) \mid h_{e}(y)\in \asabase{\asa_{e}} \textrm{ if
             and only if } \norm{h_{e}(w)}_{2}^{2} > \cb{\warp(h_{e}(y))}^{2}\},
\end{align*}
where the ASAs $\asa_{e}$ are left implicit in the notation. \emph{The
  space of $\cc H$-polymers of type $\asa$}, denoted
$\cc P_{\cc H,\asa}$, is defined by
\begin{align}
  \label{eq:ASA-Poly}
  \cc P_{\cc H,\asa} &= \coprod_{S\in \cc B(\cc H)} \cc P_{S,\asa} \\
  \label{eq:ASA-Base}
  \cc P_{S, \asa} &= \bigcap_{e\in S} \link(e) \cap \bigcap_{e\notin
                    S} \disj(e).
\end{align}
$\cc H$-polymers as introduced in \Cref{sec:HPBP} are the special case
of $\cc H$-polymers of type $\asa$ when $\asa_{e} = \cc S^{d-1}$ for
all $e\in E$.

There is a natural measure $\Omega_{\asa}$ on $\cc H$-polymers of type
$\asa$ induced by the surface measures on the hypersurfaces
$\asa_{e}$. Define the measure $\Omega_{\asa}$ on $\cc P_{S,\asa}$ to
be the pushforward of the product measure on the ASAs
$\{\asa_{e}\}_{e\in S}$ under the map $x\mapsto (h_{e}(x))_{e\in S}$,
i.e.,
\begin{equation}
  \label{eq:ASA-SM}
  d\Omega_{\asa}(w,y) = \prod_{e\in S} d\Omega_{\asa_{e}}(h_{e}(w,y)).
\end{equation}
Normalizing this measure gives a probability measure on $\cc P_{\cc
  H,\asa}$-polymers.

The definition of the matroidal Mayer coefficients is essentially the
same as in \Cref{sec:MMC}. For a spanning set $H\subset E(M_{\cc H})$
and a collection of ASAs $\asa$, the \emph{$d$-dimensional MMC of type
  $\asa$} is given by
\begin{equation}
  \label{eq:MMC-ASA}
  \int_{\R^{dn}}\prod_{e\in H}(-\indicatorthat{h_{e}(x)\in
    \asabase{\asa_{e}}})\, dx.
\end{equation}

\section{Applications}
\label{sec:DR-HCG}

For particular sequences $\vec{\cc H}$ of hyperplane arrangements the
pressure $p_{\vec{\cc H}}^{d}(z)$ arises naturally when studying
models in statistical mechanics. This section provides
examples. \Cref{sec:BI-DR} focuses on the hard sphere gas:
\Cref{sec:BI-DR-Proof} gives a new proof of \Cref{thm:BI}, while
\Cref{sec:BI-Variations} explains multi-type hard sphere gases.
\Cref{sec:SBP-DR} shows that type $D_{n}$ Coxeter arrangements arise
in the statistical mechanics of a symmetrized hard sphere gas, and
briefly describes some variations on this theme.

\subsection{The Brydges-Imbrie dimensional reduction formula}
\label{sec:BI-DR}

\subsubsection{Proof of the Brydges-Imbrie formula}
\label{sec:BI-DR-Proof}
\begin{proof}[Proof of \Cref{thm:BI}]

  Mayer's theorem represents the pressure of a statistical mechanical
  model in terms of cluster coefficients associated to connected
  graphs~\cite{BrydgesSC}. For the hard-core gas of spheres with
  radius $\frac{1}{2}$ Mayer's theorem states that
  \begin{equation}
    \label{eq:HC-Log}
    \lim_{\Lambda\nearrow \R^{d}} \frac{1}{\abs{\Lambda}} \log Z_{HC}(z)
    = \sum_{n\geq 0}\frac{z^{n}}{n!} \int_{\R^{dn}/\R^{d}} \sum_{H\in
      \cc G^{c}\cb{n}} \prod_{ij\in
      H}-\indicatorthat{\norm{x_{i}-x_{j}}_{2}\leq 1}\, dx,
  \end{equation}
  where $\cc G^{c}\cb{n}$ denotes the set of connected graphs on
  $\cb{n}$ and $\R^{dn}/\R^{d}$ indicates translations are modded out.

  The right-hand side of \Cref{eq:HC-Log} is the pressure of the braid
  arrangement $\cc B_{n}$ in $\C^{n}/(1,1,\dots,1)\C\cong \C^{n-1}$,
  as the matroid associated to $\cc B_{n}$ is the graphical matroid of
  $K_{n}$. \Cref{thm:BI} therefore follows from \Cref{thm:DR}; the
  extra factor of $-2\pi$ arises as the arrangement is rank $(n-1)$.
\end{proof}

\subsubsection{Variations on the theme}
\label{sec:BI-Variations}
Several variations on this result are known to
exist~\cite{BIb,Cardy}. Rather than be exhaustive, we will just
highlight one variation and its phrasing in terms of hyperplane
arrangements.

Consider the following variant of the braid arrangement. Fix $k\in
\N$, $n_{i}\in \N$ for $i\in \cb{k}$, and let $n=
\sum_{i=1}^{k}n_{i}$. Define an arrangement in $\C^{n}$ to be the set
of hyperplanes with normals $h_{i'j'}^{(r)} =
x^{(r)}_{i'}-x^{(s)}_{j'}$ for $i'\in \cb{n_{i}}$, $j'\in \cb{n_{j}}$,
and $r,s\in \cb{k}$, $r\neq s$. 

In the statistical mechanics picture this corresponds to a gas of
spheres of $k$ different colours, with $n_{i}$ spheres of colour
$i$. Spheres of the same colour do not interact, while spheres of
distinct colours are required to be disjoint. For $k=2$ this is known
as the \emph{Widom-Rowlinson model}. The corresponding branched
polymers have trees as tangency graphs, and are restricted to (i) have
tangent spheres be of different colours and (ii) have spheres of
different colours be disjoint.

\subsection{Dimensional reduction in the presence of symmetry
  constraints}
\label{sec:SBP-DR}

This section describes dimensional reduction formulas for gases of
hard spheres subject to symmetry
constraints. \Cref{sec:SBP-Models,sec:SBP-Mayer,sec:SBP-Bal,sec:SBP-Cn-DR}
gives a detailed account of the type $D_{n}$ Coxeter arrangement;
similar arguments apply to other models which are briefly described in
\Cref{sec:SBP-DR-Variations}.

\subsubsection{Symmetric hard sphere models}
\label{sec:SBP-Models}
The type $D_{n}$ Coxeter arrangement has hyperplanes defined by the
linear functionals
\begin{equation}
  \label{eq:SBP-Cn}
  h^{\pm}_{ij}(x) = x_{i}\pm x_{j}, \qquad i\neq j \in \cb{n}.
\end{equation}

The \emph{type $D$ hard sphere model} has partition function
\begin{equation}
  \label{eq:Cn-HCG-1}
  Z^{D}_{\Lambda}(z) = \sum_{n\geq 0} \frac{z^{n}}{n!}\int_{\Lambda^{n}} \prod_{i=1}^{n}
  \indicatorthat{\norm{x_{i}-x_{j}}_{2}\geq 1}
  \indicatorthat{\norm{x_{i}+x_{j}}_{2}\geq 1}\, dx,
\end{equation}
where $\Lambda\subset \R^{d}$ is a box centered at the origin.  The
constraint $\norm{x_{i}-x_{j}}_{2}\geq 1$ is the usual constraint for
hard spheres of radius $\frac{1}{2}$. The constraint $\norm{x_{i}+x_{j}}_{2}$
is a hard sphere constraint between the sphere at $x_{i}$ and the
mirror image $-x_{j}$ of the sphere at $x_{j}$. Prosaically, this is a
model of hard spheres that cannot distinguish between other spheres
and the mirror images of other spheres. Recall \Cref{fig:SHCG}.

Alternately, the formula for the partition function in
\Cref{eq:Cn-HCG-1} can be rewritten as a hard sphere gas in the upper
half space $\R^{d-1}\times \R_{+}$, and the condition
$\norm{x_{i}+x_{j}}_{2}\geq 1$ can be interpreted as a boundary
condition. Since the pressure of a hard sphere gas is independent of
the boundary conditions, the pressure of this model can be represented
as the partition function of branched polymers in $d+2$ dimensions by
\Cref{sec:BI-DR}. This is verified explicitly in
\Cref{sec:SBP-Bal}. More interestingly, there is a $D_{n}$-polymer
representation for the lowest-order finite volume corrections to
$Z^{D}_{\Lambda}$ as $\Lambda\uparrow \R^{d}$; this is explained in
\Cref{sec:SBP-Cn-DR}.

\begin{remark}
  \label{rem:Dowling-Exp}
  The generating function analysis in these sections follows from
  general results on \emph{exponential Dowling
    structures}~\cite{EhrenborgReaddy}; similarly the analysis of
  signed graphs is a special case of results on \emph{gain
    graphs}~\cite{Zaslavsky}.  We include the analyses for the benefit
  of readers unfamiliar with these topics.
\end{remark}

\subsubsection{Mayer expansion for the type $D$ hard core gas}
\label{sec:SBP-Mayer}
Writing $\indicator_{A} = 1 - \indicator_{A^{c}}$ in~\eqref{eq:Cn-HCG-1} yields
\begin{equation}
  \label{eq:Cn-HCG-2}
  Z^{D}_{\Lambda}(z) = \sum_{n\geq 0} \frac{z^{n}}{n!}\int_{\Lambda^{n}}
  \sum_{G\in \cc G^{\pm}\cb{n}} \prod_{(ij,\pm)\in E(G)} -\indicatorthat{\norm{x_{i}\mp
      x_{j}}_{2}\leq 1}\, dx,
\end{equation}
where $G^{\pm}\cb{n}$ denotes the set of \emph{signed graphs} on
$\cb{n}$. These are graphs $G$ together with a \emph{signing}
$\sigma\colon E(G)\to \{\pm\}$. Note that an edge $(ij,+)$ corresponds
to the constraint $\norm{x_{i}-x_{j}}_{2}\leq 1$; this convention
makes signed graphs with all signs $+$ correspond to the ordinary
graphs that arise in the non-symmetrized hard-sphere gas.

Note that signed graphs may have multiple edges: it is possible for
$i$ and $j$ to be connected by an edge labelled $+$ \emph{and} an edge
labelled $-$. Loops are not permitted. A \emph{cycle} of a signed
graph will refer to a cycle of the underlying (multi)-graph. This
means that if $(ij,+)$ and $(ij,-)$ are edges in a signed graph, the
underlying graph contains two copies of the edge $ij$, and there is a
two-cycle that consists of these two edges.

\begin{definition}
  \label{def:Coherent}
  A cycle $\cc C$ in a signed graph $(G,\sigma)$ is called
  \emph{balanced} if the product of the signs $\sigma(e)$ of the edges
  in a cycle $\cc C$ is $+1$. A signed graph $(G,\sigma)$ is
  \emph{balanced} if every cycle in the graph is balanced. A signed
  graph that is not balanced is \emph{unbalanced}.
\end{definition}

A signed graph $(G,\sigma)$ can be partitioned into two vertex
disjoint signed subgraphs $(G_{b},\sigma_{b})$ and
$(G_{u},\sigma_{u})$, the former balanced and the latter
unbalanced. Let $\cc G_{b}^{\pm}\cb{n}$ and $\cc G_{u}^{\pm}\cb{n}$
denote the sets of balanced and unbalanced signed graphs on $\cb{n}$,
respectively. Then
\begin{equation}
  \label{eq:Cn-HCG-3}
  Z^{D}_{\Lambda}(z) = \sum_{n\geq 0} \mathop{\sum_{m,\ell}}_{m+\ell=n}
  \frac{z^{n}}{m!\ell!}\int_{\Lambda^{n}}
  \sum_{G\in \cc G^{\pm}_{b}\cb{m}} \sum_{H\in \cc G^{\pm}_{u}\cb{\ell}}
  w(G)w(H)\, dx,
\end{equation}
where $w(G) = \prod_{(ij,\pm)\in E(G)} -\indicatorthat{\norm{x_{i}\pm
x_{j}}_{2}\leq 1}$. This is the convolution of two exponential
generating functions, and hence
\begin{equation}
  \label{eq:Cn-HCG-4}
  Z^{D}_{\Lambda}(z) = Z_{\Lambda}^{u}(z) Z_{\Lambda}^{b}(z),
\end{equation}
where the subscripts $u$ and $b$ indicated unbalanced and balanced,
respectively, e.g.,
\begin{equation}
  \label{eq:Cn-HCG-5}
  Z^{u}_{\Lambda}(z) = \sum_{n\geq 0} \frac{z^{n}}{n!} \sum_{G\in \cc
    G^{\pm}_{u}\cb{n}} \int_{\Lambda^{n}} w(G)\, dx.
\end{equation}

\subsubsection{Calculation of $Z_{\Lambda}^{b}(z)$}
\label{sec:SBP-Bal}
\begin{lemma}
  \label{lem:SBP-Bal}
  The map from $\cc G^{\pm}_{b}\cb{n}$ to $\cc G\cb{n}$ given by forgetting the
  signing of a balanced graph is a $2^{n-1}$-to-$1$ map when
  restricted to connected graphs.
\end{lemma}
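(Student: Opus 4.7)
The plan is to use the classical theorem of Harary (vertex switching) that characterizes balanced signed graphs, adapted to count fibers for a connected base graph $G$.

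First, I would introduce the \emph{switching} action of $(\Z/2\Z)^{V(G)}$ on signings of a graph $G$: for a subset $U\subseteq V(G)$ and a signing $\sigma$, the switched signing $\sigma^{U}$ is defined by $\sigma^{U}(ij) = -\sigma(ij)$ if exactly one of $i,j$ lies in $U$, and $\sigma^{U}(ij) = \sigma(ij)$ otherwise. A direct check shows that for any cycle $\cc C$, the product of signs around $\cc C$ is unchanged by switching, since each vertex of $\cc C$ in $U$ contributes two sign flips to the cycle product. Hence balance of $(G,\sigma)$ is preserved under switching.

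Next, I would show that if $G$ is connected, then the balanced signings of $G$ are exactly the switching orbit of the all-positive signing $\sigma_{+}\equiv +1$. One direction is immediate from the preceding paragraph. For the other direction, fix a vertex $v_{0}$ and a spanning tree $T\subset G$. Given a balanced signing $\sigma$, define $U\subset V(G)$ to be the set of vertices $v$ such that the product of $\sigma$-signs along the unique $T$-path from $v_{0}$ to $v$ is $-1$. Then $\sigma^{U}$ is identically $+1$ on $T$. Since $(G,\sigma)$ is balanced and every non-tree edge $e$ lies on a unique fundamental cycle in $T\cup\{e\}$, the sign of $e$ under $\sigma^{U}$ must also be $+1$. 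Hence $\sigma = (\sigma_{+})^{U}$, so $\sigma$ lies in the orbit of $\sigma_{+}$.

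Finally, I would compute the orbit size by identifying the stabilizer of $\sigma_{+}$. A subset $U$ fixes $\sigma_{+}$ iff no edge of $G$ has exactly one endpoint in $U$, i.e., $U$ is a union of connected components of $G$. Since $G$ is connected, this forces $U\in\{\emptyset, V(G)\}$, so the stabilizer has order $2$. Therefore the orbit, and hence the number of balanced signings of $G$, equals $2^{n}/2 = 2^{n-1}$, giving the desired $2^{n-1}$-to-$1$ map on connected graphs.

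The main obstacle is really just setting up the switching formalism cleanly; once switching-invariance of balance and the spanning tree argument are in hand, the $2^{n-1}$ count is an immediate orbit-stabilizer computation. No computation beyond inspecting cycle sign products and fundamental cycles is needed.
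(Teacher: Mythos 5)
Your proof is correct and is essentially the paper's argument in different clothing: the switching orbit of the all-positive signing under $U\mapsto\sigma^{U}$ is exactly the paper's assignment $\sigma_{ij}=\sigma_{i}\sigma_{j}$ with $U=\{i:\sigma_{i}=-1\}$, and your orbit--stabilizer count is its two-to-one vertex-labelling count (your spanning-tree argument for surjectivity onto the balanced signings is in fact a welcome sharpening of the paper's one-line assertion of consistency). The only point you leave implicit is that a balanced signed graph cannot contain both edges $(ij,+)$ and $(ij,-)$, since these would form an unbalanced two-cycle; this is needed so that the fibre over a simple connected graph $G$ really is the set of balanced signings $\sigma\colon E(G)\to\{\pm\}$.
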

\begin{proof}
  First we note that this map is well-defined: a balanced graph cannot
  contain the edges $(ij,+)$, and $(ij,-)$, as this would be an
  unbalanced cycle. Forgetting the signing of a balanced graph
  therefore does yield an ordinary graph.

  Let $G$ be a connected unsigned graph on $\cb{n}$. There is a
  one-to-$2^{n}$ map given by arbitrarily labelling each vertex $i$ of
  $G$ with $\sigma_{i}\in \{\pm\}$, and then assigning the edge $ij$
  the sign $\sigma_{i}\sigma_{j}$. Each such assignment results in a
  balanced graph: supposing that $i$ is $+$, trace any cycle
  containing $i$. By construction seeing a $-$ edge means the sign of
  the next vertex is different; since the cycle ends at $i$ there must
  be at least one change of vertex sign after observing the first
  $-$. This implies there are an even number of $-$ signs, so the
  cycle is balanced.

  The lemma follows by counting how many ways distinct signings of
  vertices can give rise to the same signing of edges. This is $2$:
  for any connected balanced signed graph once the sign of a single
  vertex is fixed, the sign of every other vertex is
  determined by $\sigma_{ij}=\sigma_{i}\sigma_{j}$. That this rule
  signs each vertex consistently follows from the graph being balanced.
\end{proof}

It follows that from \Cref{lem:SBP-Bal} that
\begin{equation}
  \label{eq:SBP-Bal-1}
  Z^{b}_{\Lambda}(z) = \sum_{n\geq 0}\frac{(2z)^{n}}{n!} 
  \sum_{H\in \cc G\cb{n}} 
  \int_{\R^{dn}} (\frac{1}{2})^{\# H}w(H)\, dx,
\end{equation}
where $\# H$ is the number of connected components of the graph
$H$. Thus balanced graphs give rise to a cluster-weighted
variant of the hard sphere gas at activity $2z$. The exponential
principle combined with the argument that gives the Mayer expansion
implies
\begin{equation}
  \label{eq:SBP-Bal-2}
  \lim_{\Lambda\uparrow \R^{d}} \log Z^{b}_{\Lambda}(z) = \frac{1}{2}\sum_{n\geq
    0}\frac{(2z)^{n}}{n!} \int_{\R^{dn}/\R^{d}} \sum_{H\in \cc G^{c}\cb{n}} w(H)\, dx.
\end{equation}
The right-hand side of \Cref{eq:SBP-Bal-2} is, up to a factor of
$\frac{1}{2}$, the pressure of the hard sphere gas at activity
$2z$. By \Cref{thm:BI} the pressure $\lim_{\Lambda\uparrow
  \R^{d}}\abs{\Lambda}^{-1}\log Z^{b}_{\Lambda}(z)$ has a branched
polymer representation. It follows from what is presented in the next
section that this pressure is equal to $\lim_{\Lambda\uparrow
  \R^{d}}\abs{\Lambda}^{-1} \log Z^{D}_{\Lambda}(z)$.

\subsubsection{Dimensional reduction for $Z_{\Lambda}^{u}(z)$}
\label{sec:SBP-Cn-DR}
The set of unbalanced graphs on $\cb{n}$ has a naturally associated
matroid $M_{D_{n}}$. The bases of the matroid are signed cycle rooted
spanning forests in which each cycle is unbalanced. The cycle may
consist of only two edges $(ij,+), (ij,-)$ for some $i\neq j$.

\begin{lemma}
  \label{lem:SBP-Cn-Bicircular}
  The matroid $M_{D_{n}}$ is the matroid associated to the hyperplane
  arrangement $D_{n}$.
\end{lemma}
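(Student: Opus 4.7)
The plan is to identify the matroid of the arrangement $D_{n}$ with the matroid $M_{D_{n}}$ described just before the lemma by computing the rank of the span of the normals for any connected signed graph, and then deducing that the bases match. Recall the normals are $h^{+}_{ij}(x)=x_{i}-x_{j}$ and $h^{-}_{ij}(x)=x_{i}+x_{j}$, and every subset of signed edges gives a collection of such vectors in $\R^{n}$. Linear independence of these vectors is the independence relation of the arrangement matroid, so the lemma amounts to showing that a set $F$ of signed edges has linearly independent normals iff each connected component of $F$ is either a tree or contains a single cycle that is unbalanced.

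The key step is the \emph{switching} operation. Given a vertex labelling $\epsilon\colon\cb{n}\to\{\pm 1\}$, the change of variables $y_{i}=\epsilon_{i}x_{i}$ sends each $h^{\sigma}_{ij}$ to $\pm h^{\epsilon_{i}\epsilon_{j}\sigma}_{ij}$. Switching thus preserves both linear independence and the balance of every cycle (the cycle product is invariant). Given a connected component $C$ on $k$ vertices, choose a spanning tree $T$ of $C$ and use switching to make every edge of $T$ positive; this is possible because $T$ has no cycles. After switching, the tree edges are the vectors $y_{i}-y_{j}$ of the braid arrangement on $V(C)$, which span exactly the hyperplane $\{\sum_{i\in V(C)}y_{i}=0\}$ and have rank $k-1$. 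A non-tree edge $e\in C$ has, after switching, a sign equal to the balance of the fundamental cycle of $e$ with respect to $T$, again because switching does not alter cycle products.

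From this one reads off the rank of $C$. If $C$ is balanced every fundamental cycle is balanced, so every non-tree edge becomes $y_{i}-y_{j}$, and the span remains the codimension-one hyperplane; hence $\mathrm{rank}(C)=k-1$. If $C$ is unbalanced, some non-tree edge $e$ has an unbalanced fundamental cycle and after switching contributes $y_{i}+y_{j}$, which does not lie in $\{\sum y_{i}=0\}$; together with the tree edges it spans all of $\R^{V(C)}$, so $\mathrm{rank}(C)=k$. Summing over components, a set $F$ of signed edges with components $F_{1},\dots,F_{m}$ of sizes $k_{1},\dots,k_{m}$ has rank $\sum_{c}(k_{c}-\indicatorthat{F_{c}\text{ balanced}})$.

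To conclude, $F$ is independent iff $|F|=\mathrm{rank}(F)$. Since every connected graph on $k_{c}$ vertices has at least $k_{c}-1$ edges, equality forces each $F_{c}$ to be either a balanced tree (exactly $k_{c}-1$ edges) or an unbalanced connected graph with exactly $k_{c}$ edges, hence unicyclic; and its unique cycle must be unbalanced (a balanced unique cycle would make the whole component balanced). This is exactly the description of the independent sets of $M_{D_{n}}$, so the two matroids coincide. The only delicate point is verifying that switching to a tree-positive gauge really does cleanly separate the balanced and unbalanced regimes; this is the main content of the argument and is handled by the fundamental-cycle identification above.
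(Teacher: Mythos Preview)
Your proof is correct and takes a genuinely different route from the paper's. The paper argues by direct case analysis: it first notes that dependencies are confined to connected components, then for a putative base (one unbalanced cycle per component) it strips off degree-one vertices to reduce any dependence to the cycle itself and checks that an unbalanced cycle of normals is independent; for spanning non-bases it locates either a balanced cycle (giving an explicit dependence) or two unbalanced cycles joined by a path (again producing a dependence). Your argument instead uses the switching operation to gauge a spanning tree into the braid arrangement, after which the sign of each non-tree edge coincides with the balance of its fundamental cycle; this immediately yields the full rank formula $\rank(F)=\sum_{c}\bigl(k_{c}-\indicatorthat{F_{c}\text{ balanced}}\bigr)$ and hence the independent sets. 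Your approach is more systematic and delivers strictly more information (the rank function, not merely the bases), while the paper's hands-on check is self-contained and avoids the change of variables. One small notational slip: in the paper's convention $h^{+}_{ij}(x)=x_{i}+x_{j}$ and an edge $(ij,+)$ corresponds to $h^{-}_{ij}$, so your superscripts are swapped relative to the text; this is harmless for the argument since both cycle balance and linear independence are unaffected by the relabelling.
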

\begin{proof}
  An edge $(ij,\pm)$ corresponds to the hyperplane defined by
  $h^{\mp}_{ij}(x)=0$.  Note that linear dependence of a set of normals
  corresponding to a signed graph can only occur if some single component
  is linearly dependent, so to establish linear dependence /
  independence it suffices to consider each connected component separately.

  In a base of the matroid each component has a single cycle. The only
  possible non-trivial linear dependencies involve only edges in the
  cycle: otherwise there is a vertex $i$ of degree $1$, and the
  corresponding coordinate $x_{i}$ cannot have coefficient $0$ in any
  linear combination. Attempting to determine a non-trivial linear
  dependence thus has only one degree of freedom: once the coefficient
  of a single edge is chosen, all other edges are determined. This
  fact together with the fact that the cycle contains an odd number of
  $-$ edges shows that any cycle is equivalent to a two-edge cycle
  with edges $(ij,+)$, $(ij,-)$, and the corresponding normals are
  linearly independent.

  Similarly, given a spanning set that is not a base, there is a
  component that contains two cycles. If there is a balanced cycle in
  the component, then there is a linear dependence along this
  cycle. If all cycles are unbalanced, then all cycles are in fact
  edge disjoint. An argument as before shows that these can be
  reduced to a pair of cycles $(ij,+),(ij,-)$, $(i'j',+),(i'j',-)$,
  and a path connecting these cycles. The corresponding normals are
  linearly dependent.
\end{proof}

\begin{theorem}
  \label{thm:SBP-Cn-Result}
  Let $Z_{D}^{BP,d}$ be the $\vec{\cc H}$-polymer partition function for
  $\vec{\cc H} = (D_{1},D_{2},\dots)$ in $\R^{d}$. For all $z$ such
  that the right-hand side converges and all $d\geq 1$,
  \begin{equation}
    \label{eq:SBP-Cn-Result}
    \lim_{\Lambda\uparrow \R^{d}}
    \frac{Z^{D}_{\Lambda}(z)}{Z_{\Lambda}^{b}(z)} =
    Z^{BP,d+2}_{D}( - \frac{z}{2\pi}).
  \end{equation}
\end{theorem}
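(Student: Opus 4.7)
My plan is to combine the factorization derived in \Cref{eq:Cn-HCG-4} with \Cref{thm:Intro-Main} applied to the sequence $\vec{\cc H} = (D_1, D_2, \ldots)$. Dividing $Z^D_\Lambda(z) = Z^u_\Lambda(z) Z^b_\Lambda(z)$ gives $Z^D_\Lambda(z) / Z^b_\Lambda(z) = Z^u_\Lambda(z)$, so it suffices to show
\begin{equation*}
\lim_{\Lambda\uparrow\R^d} Z^u_\Lambda(z) = Z^{BP,d+2}_D\!\left(-\tfrac{z}{2\pi}\right).
\end{equation*}

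I would next carry out the thermodynamic limit term by term. For an unbalanced signed graph $G$ on $\cb n$, \Cref{lem:SBP-Cn-Bicircular} shows that the corresponding subset of $E(M_{D_n})$ has full rank, so the linear forms $\{h_e\}_{e\in G}$ acting on $\R^{dn}$ have trivial joint kernel. Consequently the product $\prod_{e\in G}\indicatorthat{\norm{h_e(x)}_2 \leq 1}$ has bounded support, and $\int_{\Lambda^n} w(G)\,dx$ is eventually equal to the finite quantity $\int_{\R^{dn}} w(G)\,dx$ once $\Lambda$ contains this support. Thus, coefficientwise in $z$,
\begin{equation*}
\lim_{\Lambda\uparrow\R^d} Z^u_\Lambda(z) = \sum_{n\geq 0} \frac{z^n}{n!} \sum_{G\in \cc G^\pm_u\cb n} \int_{\R^{dn}} w(G)\,dx.
\end{equation*}

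By \Cref{lem:SBP-Cn-Bicircular} and the definition of an unbalanced signed graph, $\cc G^\pm_u\cb n$ is precisely the collection of spanning subsets of $E(M_{D_n})$; hence the right-hand side of the last display is (modulo the vacuous $n = 0$ term) the matroidal pressure $p^{(d)}_{\vec{\cc H}}(z)$ of \Cref{def:Gen-Pressure}. Applying \Cref{thm:Intro-Main}, which is available since each $D_n$ is an essential real central arrangement, yields $p^{(d)}_{\vec{\cc H}}(z) = Z^{BP,d+2}_D(-z/(2\pi))$, completing the identification.

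The principal obstacle is upgrading this coefficientwise identity to a genuine equality of convergent sums on the stated domain of $z$. The hypothesis that the right-hand side converges gives control of $\vol(P_{D_n}(d+2))$ and, via \Cref{thm:DR}, of the signed matroidal Mayer coefficients, but a dominated-convergence argument for the swap of $\lim_{\Lambda\uparrow\R^d}$ with $\sum_n$ really needs uniform control of the absolute sums $\sum_G\int|w(G)|\,dx$. In practice this is supplied by a standard cluster-expansion estimate such as a tree-graph inequality, after which the interchange is routine.
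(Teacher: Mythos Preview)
Your proof follows the same route as the paper's: use the factorization \eqref{eq:Cn-HCG-4} to reduce to $Z^{u}_{\Lambda}$, identify the unbalanced-graph sum with the matroidal Mayer coefficients for $D_{n}$ via \Cref{lem:SBP-Cn-Bicircular}, and apply the main dimensional reduction result. The only substantive difference is in handling the infinite-volume limit. You correctly flag this as the remaining obstacle and reach for a tree-graph bound; the paper instead observes that, by \eqref{eq:DR-1b}, the $n$th coefficient can be rewritten as $\sum_{G}\vol_{\Lambda}(\Gamma_{G})\,\chi_{G}(0)$, a sum whose terms all carry the common sign $(-1)^{n}$, so that the coefficient is monotone in $\Lambda$ and the monotone convergence theorem applies directly. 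In other words, the sign-definiteness you thought was missing is already supplied by \eqref{eq:DR-1b}, and no external cluster-expansion estimate is needed.
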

\begin{proof}
  By~\eqref{eq:Cn-HCG-4} the left-hand side is
  $Z^{u}_{\Lambda}(z)$. Note that each graph that contributes is
  unbalanced, which implies each connected component of the graph
  contains an unbalanced cycle. Hence this is the generating function
  of MMC associated to $M_{D_{n}}$, restricted to a finite volume
  $\Lambda$. The theorem follows by applying
  \Cref{thm:DR}. The infinite volume limit can be taken by the
  monotone convergence theorem; that the terms are
  monotonically increasing follows from \Cref{eq:DR-1b}.
\end{proof}

Thus unbalanced graphs express the lowest-order corrections to the
partition function of the type $D$ hard core gas compared to
$Z_{\Lambda}^{b}$, and these corrections can be written in terms of
$D_{n}$-polymers.

\subsubsection{Variations on the theme}
\label{sec:SBP-DR-Variations}
Similar arguments to those in
Sections~\ref{sec:SBP-Mayer}--~\ref{sec:SBP-Cn-DR} can be applied
to the arrangements with hyperplanes (in each case $i\neq j\in
\cb{n}$, $\ell\in \cb{n}$):
\begin{align*}
  h_{ij}(x) &= x_{i} + x_{j}, \\
  h_{ij}^{\pm}(x) &= x_{i}\pm x_{j}, \quad h_{\ell}(x)=0, \\
  h^{m}_{ij}(x) &= x_{i} - \zeta^{m}x_{j},
\end{align*}
where in the last example $\zeta$ is a primitive $k$th root of unity
and $0\leq m\leq k-1$. The first example is the \emph{threshold
  arrangement}, while the second is the \emph{type $B_{n}$ Coxeter
  arrangement}. For a discussion of planar $B_{n}$-polymers
see~\cite[Section~6]{MP}. The class of examples in the third case are
the \emph{Dowling arrangements}; in general these are not complexified
arrangements and hence the results only apply to branched polymers in
$2d$-dimensional space. In all cases these arrangements enforce
certain symmetry constraints between the locations of spheres.

\bibliographystyle{ieeetr}
\bibliography{Construct}

\end{document}